\numberwithin{equation}{section}
\numberwithin{figure}{section}
\theoremstyle{plain}
\newtheorem{thm}{\protect\theoremname}
\theoremstyle{remark}
\newtheorem{rem}[thm]{\protect\remarkname}
\theoremstyle{plain}
\newtheorem{prop}[thm]{\protect\propositionname}
\theoremstyle{plain}
\newtheorem{cor}[thm]{\protect\corollaryname}
\theoremstyle{plain}
\newtheorem{lem}[thm]{\protect\lemmaname}
\theoremstyle{definition}
\newtheorem{example}[thm]{\protect\examplename}
\providecommand{\corollaryname}{Corollary}
\providecommand{\examplename}{Example}
\providecommand{\lemmaname}{Lemma}
\providecommand{\propositionname}{Proposition}
\providecommand{\remarkname}{Remark}
\providecommand{\theoremname}{Theorem}
\begin{document}
\global\long\def\G{\mathcal{G}}%
 
\global\long\def\F{\mathcal{F}}%
 
\global\long\def\H{\mathcal{H}}%
\global\long\def\Z{\mathcal{Z}}%
 
\global\long\def\L{\mathcal{L}}%
\global\long\def\U{\mathcal{U}}%
\global\long\def\W{\mathcal{W}}%
 
\global\long\def\E{\mathcal{E}}%
\global\long\def\B{\mathcal{B}}%
 
\global\long\def\A{\mathcal{A}}%
\global\long\def\D{\mathcal{D}}%
\global\long\def\O{\mathcal{O}}%
 
\global\long\def\N{\mathcal{N}}%
 
\global\long\def\X{\mathcal{X}}%
 
\global\long\def\lm{\lim\nolimits}%
 
\global\long\def\then{\Longrightarrow}%
\global\long\def\BaseD{\operatorname{B}_{\mathbb{D}}}%

\global\long\def\V{\mathcal{V}}%
\global\long\def\C{\mathcal{C}}%
\global\long\def\R{\operatorname{R}}%
\global\long\def\adh{\operatorname{adh}}%
\global\long\def\Seq{\operatorname{Seq}}%
\global\long\def\intr{\operatorname{int}}%
\global\long\def\cl{\operatorname{cl}}%
\global\long\def\inh{\operatorname{inh}}%
\global\long\def\id{\operatorname{id}}%
\global\long\def\diam{\operatorname{diam}\ }%
\global\long\def\card{\operatorname{card}}%
\global\long\def\T{\operatorname{T}}%
\global\long\def\S{\operatorname{S}}%
\global\long\def\I{\operatorname{I}}%
\global\long\def\AdhD{\operatorname{A}_{\mathbb{D}}}%
\global\long\def\UD{\operatorname{U}_{\mathbb{D}}}%
\global\long\def\K{\operatorname{K}}%
\global\long\def\LD{\operatorname{L}_{\mathbb{D}}}%
\global\long\def\BRD{\operatorname{B}_{\mathcal{R}(\mathbb{D})}}%
\global\long\def\Dis{\operatorname{Dis}}%
\global\long\def\Cha{\operatorname{Cha}}%
\global\long\def\Reg{\operatorname{Reg}}%

\global\long\def\fix{\operatorname{fix}}%
\global\long\def\Epi{\operatorname{Epi}}%

\global\long\def\cont{\mathscr{C}}%
\global\long\def\conv{\mathsf{Conv}}%
\global\long\def\prtop{\mathsf{PrTop}}%
\global\long\def\Top{\mathsf{Top}}%
\global\long\def\pstop{\mathsf{PsTop}}%

\title{On connected subsets of a convergence space}
\author{Bryan Castro Herrejón and Frédéric Mynard}
\email{bryan\_ch@ciencias.unam.mx}
\email{fmynard@njcu.edu}
\begin{abstract}
Though a convergence space is connected if and only if its topological
modification is connected, connected subsets differ for the convergence
and for its topological modification. We explore for what subsets
connectedness for the convergence or for the topological modification
turn out to be equivalent. In particular, we show that the largest
set containing a given connected set for which all subsets in between
are connected is the adherence of the connected set for the reciprocal
modification of the convergence.
\end{abstract}

\maketitle

\section{preliminaries on convergence spaces}

If $X$ is a set, let $\mathbb{P}X$ denote its powerset, $\mathbb{F}X$
denote the set of (set-theoretic) filters on $X$, $\mathbb{U}X$
denote the set of ultrafilters on $X$ and $\mathbb{F}_{0}X$ denote
the set of principal filters on $X$. Given $\A\subset\mathbb{P}X$,
let $\A^{\uparrow}=\{B\subset X:\exists A\in\A,A\subset B\}$ and
$\A^{\#}=\{H\subset X:\forall A\in\A,A\cap H\neq\emptyset\}$. If
$\F\in\mathbb{F}X$, let $\beta\F=\{\U\in\mathbb{U}X:\F\subset\U\}$;
we also write $\beta A$ for $\beta\{A\}^{\uparrow}$.

Convergence spaces form a useful generalization of topological spaces.
Namely, a \emph{convergence }$\xi$ on a set $X$ is a relation between
points of $X$ and (set-theoretic) filters on $X$, denoted $x\in\lm_{\xi}\F$
if $(x,\F)\in\xi$ and interpreted as $x$ is a limit point for $\F$
in $\xi$, satisfying
\begin{equation}
\tag{centered}x\in\lm_{\xi}\{x\}^{\uparrow}\label{eq:centeredconv}
\end{equation}
for every $x\in X$ and 
\begin{equation}
\tag{monotone}\F\leq\G\then\lm_{\xi}\F\subset\lm_{\xi}\G,\label{eq:convmontone}
\end{equation}
for all pair $\F,\G\in\mathbb{F}X$. The pair $(X,\xi)$ is then called
a \emph{convergence space.}

A map $f:X\to Y$ between two convergence spaces $(X,\xi)$ and $(Y,\tau)$
is \emph{continuous}, in symbols $f\in\cont(\xi,\tau)$, if $f(x)\in\lm_{\tau}f[\F]$
whenever $x\in\lm_{\xi}\F$, where $f[\F]=\{f(F):F\in\F\}^{\uparrow}$
is the image filter. Let $\conv$ denote the category of convergence
spaces and continuous maps. If $\xi,\sigma$ are two convergences
on $X$, we say that $\xi$ is \emph{finer than $\sigma$ }or that
$\sigma$ is \emph{coarser than $\xi$}, in symbols $\xi\geq\sigma$,\emph{
}if the identity map $\id_{X}\in\cont(\xi,\sigma)$, that is, if $\lim_{\xi}\F\subset\lim_{\sigma}\F$
for every filter $\F$ on $X$. The set of convergences on a given
set is a complete lattice for this order. Moreover, $\conv$ is a
topological category (see \cite{adamek1990abstract} for details),
that is, it has all initial (and final) structures. Indeed, if $f:X\to(Y,\tau)$
there is the coarsest convergence $f^{-}\tau$ on $X$ making $f$
continuous (to $(Y,\tau)$), namely, $x\in\lm_{f^{-}\tau}\F$ if $f(x)\in\lim_{\tau}f[\F]$,
and it is initial. If $f_{i}:X\to(Y_{i},\tau_{i})$ for $i\in I$,
the supremum $\bigvee_{i\in I}f_{i}^{-}\tau_{i}$ provides an initial
lift for the source. As a result, $\conv$ has products and subspaces.
In particular if $(X,\xi)$ is a convergence space and $A\subset X$,
the \emph{induced convergence }or \emph{subspace convergence }$\xi_{|A}$
on $A$ is the initial convergence $i^{-}\xi$ for the inclusion map
$i:A\to X$, that is, $a\in\lm_{\xi_{|A}}\F$ if $a\in\lm_{\xi}\F^{\uparrow_{X}}$.

If a convergence additionally satisfies
\begin{equation}
\tag{finite depth}\lm_{\xi}(\F\cap\G)=\lm_{\xi}\F\cap\lm_{\xi}\G\label{eq:fintedepth}
\end{equation}
 for every $\F,\G\in\mathbb{F}X$, we say that $\xi$ \emph{has finite
depth} (\footnote{Many authors include (\ref{eq:fintedepth}) in the axioms of a convergence
space. Here we follow \cite{DM.book}.}). 

A convergence satisfying the stronger condition that 
\begin{equation}
\tag{pretopology}\lm_{\xi}(\bigcap_{\D\in\mathbb{D}}\D)=\bigcap_{\D\in\mathbb{D}}\lm_{\xi}\D\label{eq:pretopdeep}
\end{equation}
 for every $\mathbb{D}\subset\mathbb{F}X$ is called a \emph{pretopology}
(and $(X,\xi)$ is a \emph{pretopological space}). 

A subset $A$ of a convergence space $(X,\xi)$ is \emph{$\xi$-open
}if 
\[
\lm_{\xi}\F\cap A\neq\emptyset\then A\in\F,
\]
and $\xi$-\emph{closed} if it is closed for limits, that is,
\[
A\in\F\then\lm_{\xi}\F\subset A.
\]

Let $\O_{\xi}$ denote the set of open subsets of $(X,\xi)$ and let
$\O_{\xi}(x)=\{U\in\O_{\xi}:x\in U\}$. Similarly, let $\mathcal{C}_{\xi}$
denote the set of closed subsets of $(X,\xi)$. It turns out that
$\O_{\xi}$ is a topology on $X$\emph{. }Moreover,\emph{ }a topology
$\tau$ on a set $X$ determines a convergence $\xi_{\tau}$ on $X$
by
\[
x\in\lm_{\xi_{\tau}}\F\iff\F\geq\N_{\tau}(x),
\]
where $\N_{\tau}(x)$ denotes the neighborhood filter of $x$ for
$\tau$. In turn, $\xi_{\tau}$ completely determines $\tau$ because
$\tau=\O_{\xi_{\tau}}$, so that we do not distinguish between $\tau$
and $\xi_{\tau}$ and identify topologies with special convergences.
Moreover, a convergence $\xi$ on $X$ determines the topology $\O_{\xi}$
on $X$ which turns out to be the finest among the topologies on $X$
that are coarser than $\xi$. We call it the \emph{topological modification
of $\xi$ }and denote it $\T\xi$. The map $\T$ turns out to be a
concrete reflector. Hence the category $\Top$ of topological spaces
and continuous maps is a concretely reflective subcategory of $\conv$.
Let $\cl_{\xi}$ denote the closure operator in $\T\xi$.

Just like in the topological case, we will say that a convergence
space is $T_{1}$ if every singleton is closed, and \emph{Hausdorff
}if limit sets have at most one element.

Consider the \emph{principal adherence operator }$\adh_{\xi}:\mathbb{P}X\to\mathbb{P}X$
given by
\[
\adh_{\xi}A=\bigcup_{A\in\G^{\#}}\lm_{\xi}\G=\bigcup_{A\in\G}\lm_{\xi}\G=\bigcup_{\U\in\beta A}\lm_{\xi}\U.
\]

In general,
\[
\adh_{\xi}A\subset\cl_{\xi}A
\]
but not conversely. In contrast to $\cl_{\xi}$, the principal adherence
is in general non-idempotent because $\adh_{\xi}A$ need not be closed.

In view of (\ref{eq:pretopdeep}), a convergence $\xi$ is a pretopology
if and only if the \emph{vicinity filter }$\V_{\xi}(x)=\bigcap_{x\in\lm_{\xi}\F}\F$
converges to $x$, for every $x\in X$, equivalently if it is determined
by its principal adherence via $\lim_{\xi}\F=\bigcap_{A\in\F^{\#}}\adh_{\xi}A$
(\footnote{To see this equivalence, note that 
\[
A\in(\V_{\xi}(x))^{\#}\iff x\in\adh_{\xi}A.
\]
}). The full subcategory $\prtop$ of $\conv$ formed by pretopological
spaces and continuous maps is concretely reflective, with reflector
$\S_{0}$ given on objects by
\[
\lm_{\S_{0}\xi}\F=\bigcap_{A\in\F^{\#}}\adh_{\xi}A.
\]
On the other hand, the reflector $\T$ is given by
\[
\lm_{\T\xi}\F=\bigcap_{A\in\F^{\#}}\cl_{\xi}A,
\]
so that $\T\leq\S_{0}$, that is, every topology is a pretopology.
In fact, a pretopology is a topology if and only if the principal
adherence operator is idempotent, in which case $\adh_{\xi}=\cl_{\xi}$.\emph{
}We refer the reader to \cite{DM.book} for a systematic study of
convergence spaces and their applications to topological problems.

\section{Connectedness}

By definition, a convergence space $(X,\xi)$ is \emph{connected }if
its only subsets that are both closed and open (clopen) are $\emptyset$
and $X$, equivalently if every continuous map $f:X\to\{0,1\}$ (where
$\{0,1\}$ carries the discrete topology) is constant.

As a result, it is plain that $\xi$ is connected if and only if its
topological modification $\T\xi$ is, so that the theory of connectedness
for convergence spaces is mostly equivalent to the classical one for
topological spaces. However, some care is needed for subspaces, as
well as for products. We say that a subset $A$ of a convergence space
$(X,\xi)$ is \emph{connected }if it is as a subspace, that is, if
$(A,\xi_{|A})$ is connected. For the reason already outlined, this
is equivalent to $(A,\T(\xi_{|A}))$ being connected, but the subtlety
lies in the fact that while
\begin{equation}
\T(\xi_{|A})\geq(\T\xi)_{|A},\label{eq:Tsubspace}
\end{equation}
the reverse inequality does not need to hold, e.g., \cite[Example V.4.35]{DM.book}.
Note also that if $\xi$ and $\tau$ are convergences on $X$ with
$\xi\geq\tau$, then every $\xi$-connected subset of $X$ is also
$\tau$-connected. In particular, every $\xi$-connected subset is
also $\T\xi$-connected, but the converse is not true, e.g., \cite[Example XII.1.30]{DM.book}. 
\begin{rem}
Similarly $\T$ does not commute with products, so some care is needed
in extending results on the connectedness of products from topological
to convergence spaces, but the fact that a product space is connected
if and only if each factor is connected remains valid for convergence
spaces. Indeed, though \cite[Theorem XII.1.31]{DM.book} is formulated
for topological spaces, its proof can easily be adapted to extend
the result to arbitrary convergence spaces. See, e.g., \cite[Teorema 6.2.3]{bryantesis}
for a complete proof.
\end{rem}

On the other hand, 
\begin{prop}
\label{prop:pretopconnect} A subset $A$ of a convergence space $(X,\xi)$
is $\xi$-connected if and only if it is $\S_{0}\xi$-connected.
\end{prop}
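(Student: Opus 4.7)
The forward direction is immediate: since $\S_0\xi \leq \xi$ by definition of the reflector, the induced convergences satisfy $(\S_0\xi)_{|A} \leq \xi_{|A}$, so the monotonicity of connectedness under coarsening (noted in the paragraph preceding the proposition) yields that every $\xi$-connected subset is $\S_0\xi$-connected.

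For the converse, my plan is contrapositive: assume $A$ is not $\xi$-connected, so that there is a nontrivial partition $A = B_1 \sqcup B_2$ with both $B_i$ nonempty and $\xi_{|A}$-clopen, and I will show each $B_i$ is also $(\S_0\xi)_{|A}$-open, which disconnects $A$ for $\S_0\xi$. The pivotal computation is the identity $\adh_{\xi_{|A}} B = A \cap \adh_\xi B$ for $B \subset A$, a direct check from the definition of the induced convergence together with the ultrafilter description of $\adh$ recalled in the preliminaries (a filter on $A$ meshing with $B$ and its extension to $X$ carry the same mesh information, and limits in $\xi_{|A}$ are precisely limits in $\xi$ lying in $A$). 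Since $B_j$ is $\xi_{|A}$-closed, this yields $\adh_{\xi_{|A}} B_j \subset B_j$, hence $\adh_\xi B_j \cap B_i = \emptyset$.

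To finish I would take any filter $\F$ on $A$ with $\lm_{(\S_0\xi)_{|A}}\F \cap B_i \neq \emptyset$ and show $B_i \in \F$. If $B_i \notin \F$, then since $B_1 \cup B_2 = A$ every member of $\F$ meets $B_j$, so $B_j \in (\F^{\uparrow_X})^{\#}$; the formula $\lm_{\S_0\xi}\G = \bigcap_{C \in \G^{\#}}\adh_\xi C$ applied to $\G = \F^{\uparrow_X}$ then gives $\lm_{\S_0\xi}\F^{\uparrow_X} \subset \adh_\xi B_j$, whence $\lm_{(\S_0\xi)_{|A}}\F \subset A \cap \adh_\xi B_j$, which is disjoint from $B_i$ --- a contradiction. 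The main obstacle is the bookkeeping between filters on $A$ and their extensions to $X$; once the restriction formula for the adherence is in place, the rest is essentially formal.
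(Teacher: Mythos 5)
Your proof is correct, but it takes a genuinely different route from the paper's. The paper handles the converse in two steps: it invokes the commutation of the pretopologization with subspaces, $\S_{0}(\xi_{|A})=(\S_{0}\xi)_{|A}$ (cited from \cite[Corollary XIV.3.9]{DM.book}), and then uses the chain $\S_{0}(\xi_{|A})\geq\T(\xi_{|A})$ together with the fact that a convergence is connected if and only if its topological modification is, to transfer connectedness of $(\S_{0}\xi)_{|A}$ down to $\xi_{|A}$. You instead argue by contrapositive with a direct filter computation: a $\xi_{|A}$-clopen partition $A=B_{1}\sqcup B_{2}$ is shown to consist of $(\S_{0}\xi)_{|A}$-open sets via the restriction formula $\adh_{\xi_{|A}}B=A\cap\adh_{\xi}B$ and the pretopological limit formula $\lm_{\S_{0}\xi}\G=\bigcap_{C\in\G^{\#}}\adh_{\xi}C$. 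Each step checks out: the restriction formula follows from the ultrafilter description of $\adh$ (an ultrafilter containing $B_{j}\subset A$ traces to an ultrafilter on $A$ and conversely), closedness of $B_{j}$ in $\xi_{|A}$ then gives $\adh_{\xi}B_{j}\cap B_{i}=\emptyset$, and your meshing argument ($B_{i}\notin\F$ forces $B_{j}\in\F^{\#}$) correctly places $\lm_{(\S_{0}\xi)_{|A}}\F$ inside $A\cap\adh_{\xi}B_{j}$, disjoint from $B_{i}$. What your approach buys is self-containedness: your key identity $\adh_{\xi_{|A}}B=A\cap\adh_{\xi}B$ is essentially the content of the cited corollary, so you are in effect re-proving the special case of that commutation needed here rather than quoting it, and you bypass the detour through $\T$ entirely. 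What it costs is length and some filter bookkeeping that the paper's functorial argument hides; the paper's version also makes visible the structural reason the result holds (reflectors that commute with subspaces preserve connectedness of subsets), which is reused elsewhere in the paper.
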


\begin{proof}
Since $\xi\geq\S_{0}\xi$, every $\xi$-connected subset is also $\S_{0}\xi$-connected.
Conversely, if $A$ is $\S_{0}\xi$-connected, that is, $(\S_{0}\xi)_{|A}$
is connected, then so is $\S_{0}(\xi_{|A})$ because $\S_{0}(\xi_{|A})=(\S_{0}\xi)_{|A}$
by \cite[Corollary XIV.3.9]{DM.book}. If $A$ is connected for $\S_{0}(\xi_{|A})\geq\T(\xi_{|A})$,
it is $\T(\xi_{|A})$-connected, equivalently, $\xi$-connected.
\end{proof}
In other words, in studying connected subsets of a convergence space,
we can restrict ourselves to pretopologies.

The extension to convergence spaces of the topological fact that if
$A\subset X$ is connected and $A\subset B\subset\cl A$ then $B$
is also connected is:
\begin{prop}
\cite[Prop. XII.1.25]{DM.book} \label{prop:adhsandwich}If $A$ is
a $\xi$-connected subset of $|\xi|$ and $A\subset B\subset\adh_{\xi}A$
then $B$ is also $\xi$-connected.
\end{prop}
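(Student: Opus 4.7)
The plan is to adapt the classical topological argument to the convergence setting, replacing closure by the filter-theoretic description of the principal adherence. I will argue by contradiction: assume there is a partition $B=U\cup V$ with $U,V$ disjoint, nonempty and clopen in the subspace $(B,\xi_{|B})$, then derive a contradiction by producing a filter on $B$ that contains both $U$ and $V$.

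First I would restrict this partition to $A$. Since the inclusion $A\hookrightarrow B$ is continuous, the sets $A\cap U$ and $A\cap V$ are clopen in $(A,\xi_{|A})$, disjoint, and cover $A$. Because $A$ is $\xi$-connected (i.e., $\xi_{|A}$-connected), one of them is empty; relabeling if necessary, we may assume $A\subset U$.

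Next I would exploit $V\ne\emptyset$ and the hypothesis $B\subset\adh_{\xi}A$. Pick $v\in V$. By the very definition of the principal adherence recalled in Section~1, there exists a filter $\F\in\mathbb{F}X$ with $A\in\F$ and $v\in\lm_{\xi}\F$ (one may even take $\F$ to be an ultrafilter in $\beta A$, but this is not needed). Because $A\subset B$, we have $B\in\F$, so the trace $\G=\{F\cap B:F\in\F\}^{\uparrow}$ is a bona fide filter on $B$, and its extension $\G^{\uparrow_{X}}$ coincides with $\F$. By the defining property of the subspace convergence recalled in Section~1, $v\in\lm_{\xi_{|B}}\G$.

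Finally I would collide this convergence with the fact that $U$ and $V$ are open in $(B,\xi_{|B})$. On the one hand, $A\in\F$ with $A\subset B$ gives $A\in\G$, and since $A\subset U$, the set $U$ belongs to $\G$. On the other hand, $v\in V\cap \lm_{\xi_{|B}}\G$ together with the $\xi_{|B}$-openness of $V$ forces $V\in\G$. But then $\emptyset=U\cap V\in\G$, which is impossible. This contradiction shows no such partition of $B$ exists, so $B$ is $\xi$-connected. The only nontrivial step is the correct identification of the trace filter and its image under inclusion; once that is in place, the rest is entirely formal, so I expect no genuine obstacle, only care in handling the subspace convergence.
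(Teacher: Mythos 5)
Your argument is correct: the reduction to $A\subset U$ via continuity of the inclusion, the choice of a filter $\F$ witnessing $v\in\adh_{\xi}A$, the identification of its trace $\G$ on $B$ with $\G^{\uparrow_{X}}=\F$, and the final clash $U,V\in\G$ are all sound, and this is the standard proof of the result. The paper itself gives no proof, citing \cite[Prop.\ XII.1.25]{DM.book} instead, so there is nothing internal to compare against; your proof is exactly the expected adaptation of the topological argument to the subspace convergence.
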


\cite{DM.book} is however silent on whether this result would extend
if $\adh_{\xi}A$ is replaced by $\cl_{\xi}A$ even if $\xi$ is not
topological. The first observation is that:
\begin{prop}
\label{prop:closedconn}If $S$ is a $\xi$-connected subset of $|\xi|$
then $\cl_{\xi}S$ is connected.
\end{prop}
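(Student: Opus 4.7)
The plan is to build $\cl_{\xi}S$ from $S$ by iteratively applying the principal adherence operator $\adh_{\xi}$, invoking Proposition~\ref{prop:adhsandwich} at each step to preserve $\xi$-connectedness. The main obstacle, compared to the topological case, is that $\adh_{\xi}$ is in general not idempotent, so a single application of Proposition~\ref{prop:adhsandwich} does not reach $\cl_{\xi}S$ from $S$. As a preliminary observation, a subset $C$ of $|\xi|$ is $\xi$-closed if and only if $\adh_{\xi}C=C$: the forward direction follows by applying the definition of $\xi$-closed to ultrafilters in $\beta C$, while the converse uses that $C\in\F$ implies $C\in\F^{\#}$, so $\lm_{\xi}\F\subset\adh_{\xi}C=C$. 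Consequently $\cl_{\xi}S$ is the smallest $\adh_{\xi}$-fixed superset of $S$.

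I would package the iteration via Zorn's lemma applied to the family $\mathcal{M}$ of all $B$ with $S\subset B\subset\cl_{\xi}S$ that are $\xi$-connected, ordered by inclusion. This family is non-empty since $S\in\mathcal{M}$, and the key step is the chain condition: for any chain $(B_{i})_{i\in I}$ in $\mathcal{M}$, the union $B=\bigcup_{i}B_{i}$ is again in $\mathcal{M}$. Given a $\xi_{|B}$-clopen decomposition $B=U\sqcup V$, the identity $(\xi_{|B})_{|B_{i}}=\xi_{|B_{i}}$ makes $(U\cap B_{i},V\cap B_{i})$ a $\xi_{|B_{i}}$-clopen decomposition of $B_{i}$, so by $\xi$-connectedness each $B_{i}$ lies entirely in $U$ or entirely in $V$. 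Any point of $S\subset\bigcap_{i}B_{i}$ fixes the side, forcing all the $B_{i}$ onto the same one and hence $U=\emptyset$ or $V=\emptyset$.

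Let $B^{*}$ be a maximal element of $\mathcal{M}$ provided by Zorn's lemma. Proposition~\ref{prop:adhsandwich} applied with $A=B^{*}$ yields that $\adh_{\xi}B^{*}$ is $\xi$-connected, and $B^{*}\subset\cl_{\xi}S$ together with the $\adh_{\xi}$-fixedness of $\cl_{\xi}S$ gives $\adh_{\xi}B^{*}\subset\adh_{\xi}\cl_{\xi}S=\cl_{\xi}S$. Hence $\adh_{\xi}B^{*}\in\mathcal{M}$, and maximality forces $B^{*}=\adh_{\xi}B^{*}$, making $B^{*}$ $\xi$-closed. Since $S\subset B^{*}$ and $B^{*}$ is $\xi$-closed, $\cl_{\xi}S\subset B^{*}$, so $B^{*}=\cl_{\xi}S$, which is therefore $\xi$-connected. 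The chain step is the only part requiring real care, as it is where the behavior of subspace convergences under inclusion matters; everything else is essentially a one-step translation of the classical ``$A\subset B\subset\cl A$'' argument, guarded by Proposition~\ref{prop:adhsandwich}.
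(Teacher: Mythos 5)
Your proof is correct, but it takes a genuinely different route from the paper's. The paper transfers the problem to the topological modification: $S$ is $\T\xi$-connected, hence so is $\cl_{\xi}S$ by the classical topological theorem, and then the key point is that $\cl_{\xi}S$, being $\xi$-closed, is a $\T$-subspace (Lemma \ref{lem:openorclosedT}), so that $\T(\xi_{|\cl_{\xi}S})=(\T\xi)_{|\cl_{\xi}S}$ and $\T\xi$-connectedness of $\cl_{\xi}S$ can be pulled back to $\xi$-connectedness. You instead stay entirely inside the convergence $\xi$ and never invoke the topological result: you observe that $\xi$-closed means $\adh_{\xi}$-fixed, run Zorn's lemma on the connected sets between $S$ and $\cl_{\xi}S$ (your chain step is sound --- clopen sets pull back along the continuous inclusions $B_{i}\hookrightarrow B$, and a point of $S$ pins down the side, with the case $S=\emptyset$ being trivial), and use Proposition \ref{prop:adhsandwich} once to show a maximal element is $\adh_{\xi}$-fixed, hence closed, hence equal to $\cl_{\xi}S$. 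Your argument is essentially an ordinal-free repackaging of the transfinite iteration that the paper carries out separately as Proposition \ref{prop:iterated} (from which the present statement also follows, since $\adh_{\xi}^{\alpha}S=\cl_{\xi}S$ for $\alpha$ large enough). What the paper's route buys is brevity and a first use of the $\T$-subspace machinery that recurs throughout; what yours buys is self-containment --- no appeal to the classical theorem or to the behavior of $\T$ on subspaces --- at the cost of an appeal to Zorn's lemma that the paper's proof avoids.
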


\begin{proof}
If $S$ is $\xi$-connected, it is also $\T\xi$-connected by \cite[Prop. XII.1.8]{DM.book},
and thus so is $\cl_{\xi}S$ by the usual topological result. By \cite[Proposition V.4.36]{DM.book}
by or Lemma \ref{lem:openorclosedT} below, for $A=\cl_{\xi}S$, we
have $\T(\xi_{|\cl S})=(\T\xi)_{|\cl S}$. Hence $\cl_{\xi}S$ is
$\T(\xi_{|\cl S})$-connected, equivalently $\xi$-connected. 
\end{proof}
Hence, if $A$ is connected and
\[
A\subset B\subset\cl_{\xi}A,
\]
 we can conclude that $B$ is connected when $B$ is closed (because
$B=\cl_{\xi}A$) or when $B\subset\adh_{\xi}A$, but is it true in
general? We show that the answer is no in general and explore when
it is yes.

We will use the fact that the classical result that the union of a
family of connected subspaces with non-empty intersection is again
connected extends from topological to convergence spaces:
\begin{prop}
\label{prop:unionconnected}\cite[Corollary XX.1.22]{DM.book} If
$\{A_{t}:t\in T\}$ is a collection of connected subspaces of a convergence
space $(X,\xi)$ and $\bigcap_{t\in T}A_{t}\neq\emptyset$, then $\bigcup_{t\in T}A_{t}$
is also a connected subspace.
\end{prop}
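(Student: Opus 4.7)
The plan is to mimic the classical topological proof, using the characterization of connectedness in terms of continuous maps to the two-point discrete space $\{0,1\}$, as recalled at the start of Section~2. Set $U=\bigcup_{t\in T}A_t$ with the subspace convergence $\xi_{|U}$, and fix any $x_{0}\in\bigcap_{t\in T}A_t$, which exists by hypothesis. To show $(U,\xi_{|U})$ is connected, I would take an arbitrary continuous map $f\colon (U,\xi_{|U})\to\{0,1\}$ and prove it is constant.

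The single point that needs verification before the classical argument runs is the transitivity of the subspace construction: whenever $A\subset B\subset X$, one has $(\xi_{|B})_{|A}=\xi_{|A}$. This is immediate from the definition of $\xi_{|A}$ as the initial convergence $i^{-}\xi$ for the inclusion $i\colon A\to X$, since the composite of two inclusions $A\hookrightarrow B\hookrightarrow X$ is $i$, and iterated initial lifts of a composite source coincide with the initial lift of the composite. In particular, for every $t\in T$, the inclusion $i_{t}\colon (A_t,\xi_{|A_t})\to (U,\xi_{|U})$ is continuous, so $f\circ i_{t}\colon (A_t,\xi_{|A_t})\to\{0,1\}$ is continuous.

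Since $(A_t,\xi_{|A_t})$ is connected, $f\circ i_{t}$ is constant, and evaluating at $x_{0}\in A_t$ shows its constant value is $f(x_{0})$. Any $x\in U$ belongs to some $A_t$, so $f(x)=f(x_{0})$, and $f$ is constant on $U$. Hence $(U,\xi_{|U})$ admits no nonconstant continuous map to the discrete $\{0,1\}$, i.e., $U$ is $\xi$-connected.

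I do not anticipate a serious obstacle: the only nontrivial ingredient is the transitivity $(\xi_{|B})_{|A}=\xi_{|A}$, which follows from the general behaviour of initial structures in the topological category $\conv$. Once this is in place, the argument is a verbatim translation of the standard topological proof, and no additional hypothesis (such as finite depth or pretopologicality) is required.
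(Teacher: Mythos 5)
Your proof is correct. The paper does not actually prove this proposition\,---\,it simply cites \cite[Corollary XX.1.22]{DM.book}\,---\,so there is no in-paper argument to compare against; your argument is the standard one, and you correctly isolate the only point that needs checking beyond the topological case, namely the transitivity $(\xi_{|B})_{|A}=\xi_{|A}$ of initial (subspace) structures, which guarantees both that each inclusion $i_{t}\colon(A_{t},\xi_{|A_{t}})\to(U,\xi_{|U})$ is continuous and that connectedness of $A_{t}$ as a subspace of $X$ is the same as connectedness as a subspace of $U$.
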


We can refine Proposition \ref{prop:closedconn}. To this end, recall
that we can define iterations of the adherence of a set by transfinite
induction by $\adh_{\xi}^{0}A=A$ and if $\adh_{\xi}^{\beta}A$ has
been defined for all $\beta<\alpha$, then 
\[
\adh_{\xi}^{\alpha}A=\adh_{\xi}\big(\bigcup_{\beta<\alpha}\adh_{\xi}^{\beta}A\big).
\]
Of course, $\adh_{\xi}^{\alpha}A\subset\cl_{\xi}A$ for every ordinal
$\alpha$ and there is equality for $\alpha$ sufficiently large.
The smallest ordinal $\alpha$ such that $\adh_{\xi}^{\alpha}A=\cl_{\xi}A$
for every subset $A$ of $(X,\xi)$ is called the \emph{topological
defect of }$\xi$. Hence, pretopologies of topological defect 1 are
topologies. 

\begin{prop}
\label{prop:iterated} If $A$ is a connected subspace of a convergence
space $(X,\xi)$ then $\adh_{\xi}^{\alpha}A$ is connected for every
ordinal $\alpha$.
\end{prop}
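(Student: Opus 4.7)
The plan is a straightforward transfinite induction on $\alpha$, packaging together Propositions \ref{prop:unionconnected} and \ref{prop:adhsandwich}. We may assume $A$ is non-empty, since the claim is vacuous otherwise. The centered axiom yields $B\subset\adh_\xi B$ for every subset $B$, and a short induction then shows that the transfinite sequence $(\adh_\xi^\beta A)_\beta$ is increasing; in particular, every term contains $A$, so the terms share a common non-empty set.

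The base case $\alpha=0$ is the hypothesis. For the inductive step at an arbitrary ordinal $\alpha>0$, assume $\adh_\xi^\beta A$ is connected for every $\beta<\alpha$, and set $U_\alpha=\bigcup_{\beta<\alpha}\adh_\xi^\beta A$. Since each $\adh_\xi^\beta A$ is connected and they all contain $A$, Proposition \ref{prop:unionconnected} gives that $U_\alpha$ is connected. By the defining recurrence, $\adh_\xi^\alpha A=\adh_\xi U_\alpha$, and in particular
\[
U_\alpha\subset\adh_\xi^\alpha A\subset\adh_\xi U_\alpha.
\]
Proposition \ref{prop:adhsandwich} applied to the connected set $U_\alpha$ then shows that $\adh_\xi^\alpha A$ is connected, completing the induction.

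A single inductive step handles both successor and limit ordinals uniformly, thanks to the unified definition of $\adh_\xi^\alpha A$ as $\adh_\xi\bigl(\bigcup_{\beta<\alpha}\adh_\xi^\beta A\bigr)$. No serious obstacle is expected: the only things to verify are that the sets being unioned share a common non-empty subset (which is the centeredness remark above) and that $\adh_\xi^\alpha A$ sits between $U_\alpha$ and $\adh_\xi U_\alpha$ (immediate from the definition, since the two bounds coincide).
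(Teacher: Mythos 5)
Your proof is correct and follows essentially the same route as the paper: transfinite induction, with Proposition \ref{prop:unionconnected} giving connectedness of $\bigcup_{\beta<\alpha}\adh_{\xi}^{\beta}A$ and Proposition \ref{prop:adhsandwich} then yielding connectedness of its adherence. The only difference is that you spell out the details the paper leaves implicit (the sets all contain $A$, hence have non-empty intersection, and $B\subset\adh_{\xi}B$ by centeredness), which is fine.
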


\begin{proof}
This is clear for $A=\emptyset$, so assume $A\neq\emptyset.$ We
proceed by transfinite induction. The case of $\alpha=0$ is clear
because $A$ is connected. Suppose $\adh_{\xi}^{\beta}A$ is connected
for all $\beta<\alpha$. In view of Proposition \ref{prop:unionconnected},
$\bigcup_{\beta<\alpha}\adh_{\xi}^{\beta}A$ is connected and thus
$\adh_{\xi}^{\alpha}A=\adh_{\xi}(\bigcup_{\beta<\alpha}\adh_{\xi}^{\beta}A)$
is connected by Proposition \ref{prop:adhsandwich}.
\end{proof}
In view of Proposition \ref{prop:pretopconnect},
\begin{prop}
\label{prop:S0xiTxi} If $(X,\xi)$ is a convergence space with topological
defect 1, equivalently, $\S_{0}\xi=\T\xi$, then $\xi$-connected
subsets and $\T\xi$-connected subsets coincide.
\end{prop}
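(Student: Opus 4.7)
The plan is to reduce the statement directly to Proposition~\ref{prop:pretopconnect}. First I would verify the ``equivalently'' clause: topological defect $1$ means $\adh_{\xi}A=\cl_{\xi}A$ for every $A\subset X$, which, in view of the formulas $\lm_{\S_{0}\xi}\F=\bigcap_{A\in\F^{\#}}\adh_{\xi}A$ and $\lm_{\T\xi}\F=\bigcap_{A\in\F^{\#}}\cl_{\xi}A$ displayed in Section~1, forces $\S_{0}\xi=\T\xi$. Conversely, if $\S_{0}\xi=\T\xi$ then $\S_{0}\xi$ is a topology, so its adherence operator (which is $\adh_{\xi}$ itself) is idempotent and coincides with $\cl_{\xi}$, giving topological defect $1$.

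With the equivalence in hand, the main claim splits into two implications. The forward one is immediate from the general monotonicity principle already recorded in Section~2: since $\xi\geq\T\xi$, every $\xi$-connected subset is automatically $\T\xi$-connected, without any hypothesis on $\xi$.

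For the converse I would take $A\subset X$ to be $\T\xi$-connected and invoke the hypothesis $\T\xi=\S_{0}\xi$ to rewrite this as $A$ being $\S_{0}\xi$-connected. Proposition~\ref{prop:pretopconnect} then delivers that $A$ is $\xi$-connected, completing the argument. I do not foresee any real obstacle: the heavy lifting has been done in Proposition~\ref{prop:pretopconnect}, and the present statement is essentially a corollary that repackages the observation ``topological defect~$1$ collapses $\S_{0}\xi$ onto $\T\xi$.''
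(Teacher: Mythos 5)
Your argument is correct and matches the paper's intended proof: the paper gives no separate argument beyond the phrase ``In view of Proposition~\ref{prop:pretopconnect},'' which is precisely the reduction you carry out (monotonicity for one direction, and $\T\xi=\S_{0}\xi$ plus Proposition~\ref{prop:pretopconnect} for the other). Your explicit verification of the ``equivalently'' clause is a harmless and accurate addition.
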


To illustrate this proposition, recall from \cite{why} that a topological
space $(X,\tau)$ is \emph{of accessibility, }or is an \emph{accessibility
space,} if for each $x_{0}\in X$ and every $H\subset X$ with $x_{0}\in\cl_{\tau}(H\setminus\{x_{0}\}),$
there is a closed subset $F$ of $X$ with $x_{0}\in\cl(F\setminus\{x_{0}\})$
and $x_{0}\notin\cl_{\tau}(F\setminus H\setminus\{x_{0}\})$. 

Following \cite{DG}, we say that a pretopology $\tau$ is \emph{topologically
maximal within the class of pretopologies }if 
\[
\sigma=\S_{0}\sigma\geq\tau\text{ and }\T\sigma=\T\tau\then\sigma=\tau.
\]

It is proved in \cite{DG,dolecki1998topologically} that a topology
is accessibility if and only if it is topologically maximal within
the class of pretopologies. Hence, if $\xi$ is a convergence for
which $\T\xi$ is an accessibility space, then $\S_{0}\xi=\T\xi$
by maximality, so that Proposition \ref{prop:S0xiTxi} applies to
the effect that $\xi$-connected and $\T\xi$-connected subsets coincide. 

To give examples of accessibility spaces, recall that a topological
space $X$ is \emph{Fréchet-Urysohn} if whenever $x\in\cl_{}A$ for
$x\in X$ and $A\subset X$, there is a sequence $\{x_{n}\}_{n=1}^{\infty}$
on $A$ converging to $x$. Fréchet-Urysohn spaces in which sequences
have unique limits are accessibility, e.g., \cite{why,ChigrMyn}.
Hence,
\begin{cor}
If $(X,\xi)$ is a convergence space for which $\T\xi$ is a Hausdorff
Fréchet-Urysohn topology, then $\xi$-connected and $\T\xi$-connected
subsets coincide.
\end{cor}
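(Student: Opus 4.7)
The plan is to chain together the facts already assembled in the paragraphs immediately preceding the statement, so that the corollary reduces to an application of Proposition \ref{prop:S0xiTxi} via the notion of accessibility.

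First I would observe that Hausdorffness of $\T\xi$ in particular implies that sequences in $\T\xi$ have at most one limit. Combined with the Fréchet--Urysohn hypothesis, this puts $\T\xi$ into the class of Fréchet--Urysohn spaces with unique sequential limits, which by \cite{why,ChigrMyn} (as noted just before the corollary) are accessibility spaces in the sense of \cite{why}. So the first step is simply to record that $\T\xi$ is an accessibility space.

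Next I would invoke the equivalence, due to \cite{DG,dolecki1998topologically} and recalled above, between topologies that are accessibility and topologies that are \emph{topologically maximal within the class of pretopologies}. Applying this maximality to $\sigma=\S_{0}\xi$, which is a pretopology satisfying $\sigma=\S_{0}\sigma\geq\T\xi$ and $\T\sigma=\T\xi$ (since $\T$ is a reflector and $\T\leq\S_{0}\leq\id$), yields $\S_{0}\xi=\T\xi$. In other words, $\xi$ has topological defect $1$.

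Finally, with $\S_{0}\xi=\T\xi$ established, Proposition \ref{prop:S0xiTxi} immediately delivers the conclusion that $\xi$-connected and $\T\xi$-connected subsets coincide. I do not expect a genuine obstacle here: the substantive work is already packaged in the accessibility$\Longleftrightarrow$topologically maximal equivalence and in Proposition \ref{prop:S0xiTxi}, so the corollary is really a matter of verifying the hypotheses of each cited result in turn. The only thing to be careful about is to explicitly point out that the implication \emph{Hausdorff $+$ Fréchet--Urysohn $\Longrightarrow$ accessibility} requires uniqueness of \emph{sequential} limits, which is what Hausdorffness of $\T\xi$ supplies.
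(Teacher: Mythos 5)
Your proposal is correct and follows essentially the same route the paper takes: Hausdorff plus Fr\'echet--Urysohn gives accessibility, accessibility gives topological maximality of $\T\xi$ among pretopologies, maximality applied to $\sigma=\S_{0}\xi$ forces $\S_{0}\xi=\T\xi$, and Proposition \ref{prop:S0xiTxi} concludes. The paper leaves this chain implicit in the paragraphs preceding the corollary; you have merely (and correctly) spelled out the verification that $\sigma=\S_{0}\xi$ satisfies the hypotheses of the maximality condition.
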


In this paper, we explore when $\xi$-connected and $\T\xi$-connected
subsets do or do not\emph{ }coincide.

\section{Convergences on finite sets and directed graph\protect\label{subsec:convergences-on-finite}}

Note that finitely deep convergences and pretopologies coincide on
finite sets. Moreover, they are entirely determined by the convergence
of principal ultrafilters. Denoting with an arrow $x\to y$ if $y\in\lm\{x\}^{\uparrow}$,
a pretopology on a finite set induces a directed graph (digraph) on
$X$, with a loop at each point, because of (\ref{eq:centeredconv}).
In this paper, we will use graphs to represent certain examples of
convergences on finite sets. We will omit the loops at each point
in order not to overburden pictures, because they are implicit. Thus,
for example, the usual Sierpinski topological space would be denoted
by 
\[
\xymatrix{0\ar[r] & 1}
\]
 and not 
\[
\xymatrix{0\ar[r]\ar@(dl,dr) & 1\ar@(dl,dr)}
\]

\bigskip{}

Note also that the basic example
\[
\xymatrix{a\ar[d]\\
b\ar[r] & c
}
\]
corresponds to the pretopology $\xi$ given by $\V_{\xi}(a)=\{a\}^{\uparrow}$,
$\V_{\xi}(b)=\{a,b\}^{\uparrow}$ and $\V_{\xi}(c)=\{b,c\}$ and is
not topological: an open set $O$ around $c$ must also contain $b$
because $\lim\{b\}^{\uparrow}\cap O\neq\emptyset$ hence $O\in\{b\}^{\uparrow}$.
By the same token, it must also contain $a$, so that $O=\{a,b,c\}$.
In particular, $c\in\lm_{\T\xi}\{a\}^{\uparrow}$ but $c\notin\lm_{\xi}\{a\}^{\uparrow}$.
In fact $\T\xi$ is given by the graph 
\[
\xymatrix{a\ar[d]\ar[dr]\\
b\ar[r] & c
}
\]
 and more generally, if $\xi$ is a pretopology on a finite set the
digraph for $\T\xi$ is the transitive closure of the digraph for
$\xi$. 

In a directed graph, we say a finite sequence $\{x_{n}\}_{n=1}^{n=k}$
of vertices is \emph{a path from $a$ to $b$ }if $x_{1}=a$, $x_{k}=b$
and $x_{n}\to x_{n+1}$ for every $n\in\{1,\ldots k-1\}$, that is,
in the convergence space interpretation $x_{n+1}\in\lm\{x_{n}\}^{\uparrow}$
for every $n\in\{1,\ldots k-1\}$. We say that a path from $a$ to
$b$ is \emph{in $A$ }if $x_{n}\in A$ for all $n\in\{1,\ldots k\}$.
In a (non-directed) graph, we may say ``a path between $a$ and $b$''
instead. A graph is connected if there is a path between any two vertices.
Recall that a directed graph is \emph{weakly connected }if its underlying
(non-directed) graph is connected, and \emph{connected} if there is
a directed path between every pair of vertices.
\begin{prop}
\label{prop:finiteconnected} A finite convergence space is connected
if and only if its underlying digraph is weakly connected.
\end{prop}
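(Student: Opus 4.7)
The plan is to reduce to pretopologies and then argue both implications by directly linking arrows of the digraph to clopen subsets.

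First I would apply Proposition \ref{prop:pretopconnect} to replace $\xi$ by its pretopological reflection $\S_{0}\xi$. On a finite set every ultrafilter is principal, so
\[
\adh_{\xi}\{x\}=\bigcup_{\U\in\beta\{x\}}\lm_{\xi}\U=\lm_{\xi}\{x\}^{\uparrow},
\]
and the arrow relation $x\to y$ is the same whether read off $\xi$ or $\S_{0}\xi$. Hence I may assume $\xi$ itself is a pretopology; in particular every filter on $X$ has the form $\{F\}^{\uparrow}$ with $F=\bigcap\F$ nonempty, and finite depth gives $\lm_{\xi}\{F\}^{\uparrow}=\bigcap_{a\in F}\lm_{\xi}\{a\}^{\uparrow}$.

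For the forward implication I would argue contrapositively. Suppose the digraph is not weakly connected and fix a partition $X=A\sqcup B$ with $A,B$ nonempty and no arrow crossing it in either direction. I claim $A$ is clopen. For openness, take $x\in A\cap\lm_{\xi}\F$ and set $F=\bigcap\F$; for each $b\in F$ the inequality $\{F\}^{\uparrow}\le\{b\}^{\uparrow}$ and monotonicity yield $x\in\lm_{\xi}\{b\}^{\uparrow}$, that is, $b\to x$. Since no arrow enters $A$ from $B$, every $b\in F$ lies in $A$, so $F\subset A$ and $A\in\F$. Closedness is symmetric: if $A\in\F$ and $y\in\lm_{\xi}\F$, then for every $a\in F\subset A$ the same monotonicity argument gives $a\to y$, forcing $y\in A$.

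For the converse I would use Proposition \ref{prop:unionconnected} along paths. Any single arrow $x\to y$ makes the two-point subspace $\{x,y\}$ connected: in the induced pretopology its only open subsets are $\emptyset$, $\{x\}$ and $\{x,y\}$, and $\{x\}$ is not closed because $y\in\lm_{\xi}\{x\}^{\uparrow}$. Fix $x_{0}\in X$; weak connectedness of the digraph yields, for each $y\in X$, an undirected path $x_{0}=z_{0},z_{1},\ldots,z_{k}=y$ with an arrow between consecutive vertices. The two-point subspaces $\{z_{i},z_{i+1}\}$ are connected and successive ones share the vertex $z_{i}$, so iterating Proposition \ref{prop:unionconnected} produces a connected subspace $P_{y}$ containing both $x_{0}$ and $y$. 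Since $X=\bigcup_{y\in X}P_{y}$ is a union of connected subspaces all meeting at $x_{0}$, one last application of Proposition \ref{prop:unionconnected} shows $X$ is connected.

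I do not anticipate a serious obstacle. The only subtle point is the preliminary reduction, namely checking that the digraph of an arbitrary finite convergence agrees with that of its pretopological reflection so that Proposition \ref{prop:pretopconnect} legitimately lets one restrict attention to pretopologies. Once this is in place, both implications collapse to short monotonicity arguments combined with Proposition \ref{prop:unionconnected}.
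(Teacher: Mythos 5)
Your proof is correct. The forward direction (no weak connectivity implies a clopen separation) is essentially the paper's argument, just spelled out: the paper simply asserts that the undirected connected component of a point is clopen, while you verify clopen-ness of one side of the partition via principality of filters on a finite set and monotonicity. For the converse the two proofs diverge in form: the paper argues contrapositively, picking points in two disjoint nonempty clopen sets and observing that no undirected path can join them, whereas you argue directly, showing each arrow $x\to y$ yields a connected two-point subspace and gluing these along undirected paths with Proposition \ref{prop:unionconnected}. The underlying combinatorial fact is the same (an arrow in either direction obstructs a clopen separation of its endpoints), but your constructive gluing makes the reliance on Proposition \ref{prop:unionconnected} explicit, at the cost of being longer. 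Two small remarks: the preliminary reduction to $\S_{0}\xi$ is legitimate (the digraphs agree, since $\lm_{\S_{0}\xi}\{x\}^{\uparrow}=\adh_{\xi}\{x\}=\lm_{\xi}\{x\}^{\uparrow}$ on a finite set) but unnecessary, as every step you take uses only principality of filters and monotonicity, which hold for an arbitrary finite convergence; and in the two-point subspace $\{x,y\}$ the set $\{x\}$ need not be open (e.g.\ if also $y\to x$), though this does not affect your argument, which only needs that $\{y\}$ is not open and $\{x\}$ is not closed.
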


\begin{proof}
Suppose the underlying digraph is not weakly connected, that is, there
are $a$ and $b$ with no path from $a$ to $b$ in the underlying
(non-directed) graph. Then the graph connected component of $a$ is
closed and open in the convergence and does not contain $b$, so that
the convergence is not connected. Suppose conversely that the convergence
is not connected, so that there are two non-empty disjoint clopen
subsets. Pick $a$ and $b$ in these two disjoint sets. There is no
path between $a$ and $b$ in the underlying undirected graph, and
thus the underlying digraph is not weakly connected. 
\end{proof}
Note that this means that connectedness only depends on the underlying
non-directed graph. Hence, a finite convergence space $(X,\xi)$ is
connected if and only if its \emph{reciprocal modification} $r\xi$,
given by 
\begin{equation}
t\in\lm_{r\xi}\{x\}^{\uparrow}\iff t\in\lm_{\xi}\{x\}^{\uparrow}\text{ or }x\in\lm_{\xi}\{t\}^{\uparrow},\label{eq:reciprocal}
\end{equation}
which has the same underlying (non-directed) graph, is connected. 

\section{Reciprocal modification and connectedness}

The observation made on finite spaces that connectedness only depends
on the reciprocal modification extends to general spaces. First, let
us extend the definition of $r$, as a finite convergences of finite
depth only need be defined on principal ultrafilters, but $r\xi$
needs to be defined for general filter on an infinite $X$. Given
a convergence $\xi$ on $X$, define its \emph{reciprocal modification
}$r\xi$ by:
\begin{equation}
t\in\lm_{r\xi}\F\iff\begin{cases}
t\in\lm_{\xi}\F & \F\notin\mathbb{F}_{0}X\cap\mathbb{U}X\\
t\in\lm_{\xi}\{x\}^{\uparrow}\text{ or }x\in\lm_{\xi}\{t\}^{\uparrow} & \F=\{x\}^{\uparrow}
\end{cases}.\label{eq:reciprocalfull}
\end{equation}

\begin{prop}
\label{prop:rxiproperties} The reciprocal modification $r$ is a
concrete reflector which commutes with subspaces and for every convergence
space $(X,\xi)$, $\xi$-clopen and $r\xi$-clopen subsets of $X$
coincide. 
\end{prop}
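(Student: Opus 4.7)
The plan is to verify the three assertions in turn, starting with the reflector property. First I would check that (\ref{eq:reciprocalfull}) defines a convergence: centeredness follows at once from the centeredness of $\xi$, while monotonicity transfers from $\xi$ once one notes that a principal ultrafilter admits no proper filter refinement, so the case-split in (\ref{eq:reciprocalfull}) is compatible with the order on filters. By construction $\xi \geq r\xi$, so the identity is the candidate unit of the reflection. Idempotence $r(r\xi) = r\xi$ reduces to the observation that the disjunction ``$t \in \lm_{\xi} \{x\}^{\uparrow}$ or $x \in \lm_{\xi} \{t\}^{\uparrow}$'' is symmetric in $t$ and $x$, so iterating $r$ produces nothing new. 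The reflective universal property then amounts to checking that if $\tau$ satisfies $r\tau = \tau$, which is equivalent to $t \in \lm_{\tau} \{x\}^{\uparrow} \iff x \in \lm_{\tau} \{t\}^{\uparrow}$, then any $f \in \cont(\xi, \tau)$ also lies in $\cont(r\xi, \tau)$: for $\F \notin \mathbb{F}_{0} X \cap \mathbb{U} X$ this is the original continuity, while for $\F = \{x\}^{\uparrow}$ in the ``new'' case $x \in \lm_{\xi} \{t\}^{\uparrow}$, continuity of $f$ gives $f(x) \in \lm_{\tau} \{f(t)\}^{\uparrow}$, and reciprocity of $\tau$ then yields $f(t) \in \lm_{\tau} \{f(x)\}^{\uparrow} = \lm_{\tau} f[\F]$.

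For the subspace commutation $r(\xi_{|A}) = (r\xi)_{|A}$, I would use that a filter $\F$ on $A$ is a principal ultrafilter exactly when its extension $\F^{\uparrow_{X}}$ to $X$ is, together with the subspace formula $\lm_{\xi_{|A}} \F = A \cap \lm_{\xi} \F^{\uparrow_{X}}$. Plugging these into (\ref{eq:reciprocalfull}) shows that $r(\xi_{|A})$ and $(r\xi)_{|A}$ have the same limit set on every filter on $A$, case by case.

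The substantive part is the clopen coincidence. Since $\xi \geq r\xi$, every $r\xi$-clopen set is automatically $\xi$-clopen, so the real work is the converse. Assume $A$ is both $\xi$-open and $\xi$-closed; I would verify that $A$ is $r\xi$-open, the argument for $r\xi$-closedness being symmetric with the roles of $\xi$-openness and $\xi$-closedness exchanged. Suppose $\lm_{r\xi} \F \cap A \neq \emptyset$. If $\F \notin \mathbb{F}_{0} X \cap \mathbb{U} X$, then $\lm_{r\xi} \F = \lm_{\xi} \F$ and $\xi$-openness of $A$ immediately gives $A \in \F$. Otherwise $\F = \{x\}^{\uparrow}$ and some $t \in \lm_{r\xi} \{x\}^{\uparrow} \cap A$: either $t \in \lm_{\xi} \{x\}^{\uparrow}$, in which case $\xi$-openness of $A$ at $t \in A$ forces $A \in \{x\}^{\uparrow}$, that is $x \in A$; or $x \in \lm_{\xi} \{t\}^{\uparrow}$, in which case $\xi$-closedness of $A$ applied to $\{t\}^{\uparrow} \ni A$ forces $x \in A$; either way $A \in \F$. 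The point worth flagging as the conceptual obstacle is exactly this cross-coupling: the clopen hypothesis is essential because the two sub-cases of the disjunctive definition of $r\xi$ demand, respectively, the open-like and the closed-like property of $A$.
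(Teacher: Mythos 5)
Your proof is correct and follows essentially the same route as the paper's: the only cosmetic difference is that you establish the reflector property via the universal property for maps into reciprocal convergences, whereas the paper packages it as ``order-preserving, contractive, idempotent (hence a projector) plus functorial,'' but the core computation (transferring the case $x\in\lm_{\xi}\{t\}^{\uparrow}$ through continuity) is identical. Your case analysis for the clopen coincidence, including the observation that openness and closedness of $A$ are each needed in both verifications, matches the paper's argument exactly.
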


\begin{proof}
The modification $r$ is order-preserving ($\xi\leq\tau\then r\xi\leq r\tau$),
contractive ($r\xi\leq\xi$), and idempotent ($r(r\xi)=r\xi$), hence
it is a projector in the sense of \cite{DM.book}. Moreover, if $f\in\cont(\xi,\tau)$
and $t\in\lm_{r\xi}\F$ then $t\in\lm_{\tau}\F\subset\lim_{r\tau}\F$
if $\F$ is not a principal ultrafilter. If it is, say $\F=\{x\}^{\uparrow}$,
then $t\in\lm_{\xi}\{x\}^{\uparrow}$ or $x\in\lm_{\xi}\{t\}^{\uparrow}$.
Hence, $f(t)\in\lm_{\tau}\{f(x)\}^{\uparrow}$ or $f(x)\in\lm_{\tau}\{f(t)\}^{\uparrow}$,
so that $f(t)\in\lm_{r\tau}\{f(x)\}^{\uparrow}$. Hence $r$ is also
a functor, hence a reflector. In particular, if $A\subset X$ then
$r(\xi_{|A})\geq(r\xi)_{|A}$ and we only need to verify the reverse
inequality. Suppose $a\in\lm_{r\xi}\F$ where $A\in\F$. If $\F$
is not principal then $a\in\lm_{\xi_{|A}}\F\subset\lim_{r(\xi_{|A})}\F$.
Else, $\F=\{b\}^{\uparrow}$ where $b\in A$ and $a\in\lm_{\xi}\{b\}^{\uparrow}$
or $b\in\lm_{\xi}\{a\}^{\uparrow}$, so that $a\in\lm_{r(\xi_{|A})}\F$.

As $\xi\geq r\xi$, $\T\xi\geq\T(r\xi)$, so that every $r\xi$-clopen
subset is also $\xi$-clopen. Let $U$ be a non-empty $\xi$-clopen
subset of $X$. To see that $U$ is $r\xi$-open, let $\lm_{r\xi}\F\cap U\neq\emptyset$.
If $\F\notin\mathbb{F}_{0}X\cap\mathbb{U}X$, $\lm_{\xi}\F\cap U\neq\emptyset$
and thus $U\in\F$ because $U$ is $\xi$-open. Else, $\F=\{x\}^{\uparrow}$
and there is $t\in U\cap\lm_{r\xi}\{x\}^{\uparrow}$, that is, either
$t\in\lm_{\xi}\{x\}^{\uparrow}$ and $U\in\{x\}^{\uparrow}=\F$, or
$x\in\lm_{\xi}\{t\}^{\uparrow}$ and $x\in\adh_{\xi}U\subset U$,
that is, $U\in\{x\}^{\uparrow}=\F$. Hence, $U$ is $r\xi$-open.
To see that $U$ is $r\xi$-closed, let $U\in\F$ and $t\in\lm_{r\xi}\F$.
If $\F\notin\mathbb{F}_{0}X\cap\mathbb{U}X$, $t\in\lm_{\xi}\F\subset\cl_{\xi}U=U$.
Else, $\F=\{x\}^{\uparrow}$ for som $x\in U$ and either $t\in\lm_{\xi}\{x\}^{\uparrow}\subset\cl_{\xi}U=U$
or $x\in\lm_{\xi}\{t\}^{\uparrow}$ and $U\in\{t\}^{\uparrow}$ because
$U$ is $\xi$-open, so that $t\in U$. Hence, $U$ is $r\xi$-closed.
\end{proof}
\begin{cor}
\label{cor:connectedrxi} A subset $A$ of a convergence space $(X,\xi)$
is $\xi$-connected if and only if it is $r\xi$-connected.
\end{cor}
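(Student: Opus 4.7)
The plan is to reduce the statement directly to the two key features of the reciprocal modification already established in Proposition \ref{prop:rxiproperties}, namely that $r$ commutes with subspaces and that $\xi$-clopen and $r\xi$-clopen subsets coincide, and then unfold the definition of connectedness for a subspace.

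More concretely, I would argue as follows. By definition, $A$ is $\xi$-connected if and only if the convergence space $(A,\xi_{|A})$ is connected, that is, the only $\xi_{|A}$-clopen subsets of $A$ are $\emptyset$ and $A$. Applying the clopen-coincidence part of Proposition \ref{prop:rxiproperties} not to $\xi$ but to the convergence $\xi_{|A}$ on $A$, I get that the $\xi_{|A}$-clopen subsets of $A$ are exactly the $r(\xi_{|A})$-clopen subsets of $A$. Hence $(A,\xi_{|A})$ is connected if and only if $(A,r(\xi_{|A}))$ is connected.

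Now I invoke the second assertion of Proposition \ref{prop:rxiproperties}, namely $r(\xi_{|A})=(r\xi)_{|A}$, to rewrite the latter convergence as the subspace convergence induced on $A$ by $r\xi$. Therefore $(A,r(\xi_{|A}))$ being connected is precisely the statement that $A$ is $r\xi$-connected, which completes the chain of equivalences.

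I do not anticipate a genuine obstacle here: all the real work, namely verifying that $r$ preserves clopen sets and commutes with subspace formation, has already been done in Proposition \ref{prop:rxiproperties}. The only thing to be careful about is that the clopen-coincidence property is a statement about every convergence space, so it may legitimately be applied to $(A,\xi_{|A})$ rather than only to $(X,\xi)$; once this is spelled out, the corollary is a one-line composition of equivalences.
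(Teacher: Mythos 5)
Your proposal is correct and follows essentially the same route as the paper: both rest entirely on Proposition \ref{prop:rxiproperties}, applying the clopen-coincidence statement to the subspace convergence $\xi_{|A}$ and then using $r(\xi_{|A})=(r\xi)_{|A}$ to identify the result with $(r\xi)_{|A}$-connectedness. The only cosmetic difference is that the paper dispatches the forward implication via $\xi\geq r\xi$ and uses the clopen argument only for the converse, whereas you run both directions through the clopen equivalence at once.
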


\begin{proof}
As $\xi\geq r\xi$, $A$ is $r\xi$-connected whenever it is $\xi$-connected.
Conversely, assume that $A$ is not $\xi$-connected, that is, there
is a non-empty proper $\xi_{|A}$-clopen subsets $C$ of $A$. In
view of Proposition \ref{prop:rxiproperties}, $C$ is also $r(\xi_{|A})$-clopen
and $r(\xi_{|A})=(r\xi)_{|A}$, hence $A$ is not $r\xi$-connected.
\end{proof}
Consider the dual Alexandroff pretopologies $\xi^{*}$ and $\xi^{\bullet}$
associated with a convergence $\xi$ on $X$ and defined by 
\[
\adh_{\xi^{*}}A:=\{t\in X:\lm_{\xi}\{t\}^{\uparrow}\cap A\neq\emptyset\},
\]
and 
\[
\adh_{\xi^{\bullet}}A:=\bigcup_{a\in A}\lm_{\xi}\{a\}^{\uparrow},
\]
respectively (their topological modifications are considered in \cite{DM.uK,myn.completeness}).
Note that $B\cap\adh_{\xi^{\bullet}}A\neq\emptyset$ if and only if
$A\cap\adh_{\xi^{*}}B\neq\emptyset$ and thus a subset of $X$ is
$\xi^{*}$-open if and only if it is $\xi^{\bullet}$-closed. Of course,
if $\xi$ is $T_{1}$ then both $\xi^{*}$ and $\xi^{\bullet}$ are
discrete. 
\begin{prop}
\label{prop:rxiadherence} If $(X,\xi)$ is a convergence space and
$A\subset X$ then 
\[
\adh_{r\xi}A=\adh_{\xi}A\cup\adh_{\xi^{*}}A,
\]
and more generally,
\[
\adh_{r\xi}^{\alpha}A=\adh_{\xi}^{\alpha}A\cup\adh_{\xi^{*}}^{\alpha}A,
\]
 for every ordinal $\alpha$. In particular, 
\[
\cl_{r\xi}A=\cl_{\xi}A\cup\cl_{\xi^{*}}A.
\]
\end{prop}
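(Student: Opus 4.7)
The plan is to derive the base identity $\adh_{r\xi}A = \adh_{\xi}A \cup \adh_{\xi^{*}}A$ by unpacking the definition of $r\xi$ on ultrafilters, then lift it to the iterated version by transfinite induction on $\alpha$, and finally specialize $\alpha$ to obtain the closure identity. For the base case, expand $\adh_{r\xi}A = \bigcup_{\U \in \beta A} \lm_{r\xi}\U$ and split the union over $\beta A$ into the non-principal ultrafilters and the principal ones $\{x\}^{\uparrow}$ with $x \in A$. By (\ref{eq:reciprocalfull}), each non-principal $\U \in \beta A$ satisfies $\lm_{r\xi}\U = \lm_{\xi}\U$ and hence contributes exactly the non-principal part of $\adh_{\xi}A = \bigcup_{\U \in \beta A}\lm_{\xi}\U$. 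For each principal $\{x\}^{\uparrow}$ with $x \in A$, the definition gives $\lm_{r\xi}\{x\}^{\uparrow} = \lm_{\xi}\{x\}^{\uparrow} \cup \{t : x \in \lm_{\xi}\{t\}^{\uparrow}\}$: unioning the first summand over $x \in A$ completes $\adh_{\xi}A$, while the second collects into $\{t : \lm_{\xi}\{t\}^{\uparrow} \cap A \neq \emptyset\} = \adh_{\xi^{*}}A$.

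For the iterated identity I would proceed by transfinite induction on $\alpha$. The base $\alpha = 0$ is trivial, and the limit-ordinal case is immediate because both sides are defined via unions over $\gamma < \alpha$. For the successor-type step, apply the base identity to $C := \bigcup_{\gamma<\alpha}\adh_{r\xi}^{\gamma}A$, which by the inductive hypothesis equals $B \cup B^{*}$ with $B := \bigcup_{\gamma<\alpha}\adh_{\xi}^{\gamma}A$ and $B^{*} := \bigcup_{\gamma<\alpha}\adh_{\xi^{*}}^{\gamma}A$. Since $\beta(U \cup V) = \beta U \cup \beta V$, both $\adh_{\xi}$ and $\adh_{\xi^{*}}$ are additive on subsets, and distributing yields
\[
\adh_{r\xi}^{\alpha}A = \adh_{\xi}^{\alpha}A \cup \adh_{\xi^{*}}^{\alpha}A \cup \adh_{\xi}B^{*} \cup \adh_{\xi^{*}}B.
\]
The main obstacle is to absorb the two cross terms $\adh_{\xi}B^{*}$ and $\adh_{\xi^{*}}B$ into $\adh_{\xi}^{\alpha}A \cup \adh_{\xi^{*}}^{\alpha}A$. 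These describe \emph{mixed} adherence paths interspersing $\xi$-steps with $\xi^{*}$-steps (forward and reversed arrows), which a priori need not be reproducible by iterating either operator alone. Carrying this out seems to require a secondary induction leveraging the interplay between the arrows of $\xi$ and of $\xi^{*}$, essentially to the effect that any mixed iterate of height $\alpha$ starting in $A$ is reached by a purely $\xi$- or purely $\xi^{*}$-iterate of the same height.

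Once the iterated identity is established, the closure equation $\cl_{r\xi}A = \cl_{\xi}A \cup \cl_{\xi^{*}}A$ follows by choosing $\alpha$ larger than each of the topological defects of $\xi$, $\xi^{*}$, and $r\xi$ simultaneously, at which point each of the three iterated adherences has stabilized at the corresponding topological closure and the identity passes to the limit.
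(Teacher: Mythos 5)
Your treatment of the case $\alpha=1$ is correct and is essentially the paper's own argument: split $\beta A$ into principal and non-principal ultrafilters and unpack (\ref{eq:reciprocalfull}). The real issue is the inductive step, and you have put your finger on exactly the right spot: after applying the base identity to $C=B\cup B^{*}$ and distributing, the cross terms $\adh_{\xi}B^{*}$ and $\adh_{\xi^{*}}B$ appear, and they record \emph{mixed} paths that alternate forward and backward steps. You leave absorbing them as an obstacle to be handled by a ``secondary induction''; in fact they cannot be absorbed, because the iterated identity is false for $\alpha\geq2$. Take $X=\{a,b,c\}$ with the pretopology $\lim\{a\}^{\uparrow}=\{a,b\}$, $\lim\{b\}^{\uparrow}=\{b\}$, $\lim\{c\}^{\uparrow}=\{b,c\}$, i.e., the digraph $a\to b\leftarrow c$ (with implicit loops). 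Then $\adh_{\xi}^{2}\{a\}=\adh_{\xi}\{a,b\}=\{a,b\}$ and $\adh_{\xi^{*}}^{2}\{a\}=\adh_{\xi^{*}}\{a\}=\{a\}$, whereas $\adh_{r\xi}^{2}\{a\}=\adh_{r\xi}\{a,b\}=\{a,b,c\}$, because $c\in\lm_{r\xi}\{b\}^{\uparrow}$ on account of $b\in\lm_{\xi}\{c\}^{\uparrow}$. The point $c$ is reached from $a$ by one forward step followed by one backward step, so it lies in the cross term $\adh_{\xi^{*}}(\adh_{\xi}\{a\})$ and in neither pure iterate. The same example refutes the closure statement: $\cl_{r\xi}\{a\}=\{a,b,c\}$ while $\cl_{\xi}\{a\}\cup\cl_{\xi^{*}}\{a\}=\{a,b\}\cup\{a\}=\{a,b\}$.

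You should know that the paper's own proof commits precisely the leap you were wary of: in the displayed computation it passes from $\adh_{r\xi}\big(\bigcup_{\beta<\alpha}\adh_{\xi}^{\beta}A\big)\cup\adh_{r\xi}\big(\bigcup_{\beta<\alpha}\adh_{\xi^{*}}^{\beta}A\big)$ directly to $\adh_{\xi}^{\alpha}A\cup\adh_{\xi^{*}}^{\alpha}A$, silently discarding the terms $\adh_{\xi^{*}}B$ and $\adh_{\xi}B^{*}$ that the case $\alpha=1$ produces. So your instinct was sound, and the honest conclusion is that only the first identity of the proposition is correct as stated; for larger $\alpha$ one has only the inclusion $\adh_{\xi}^{\alpha}A\cup\adh_{\xi^{*}}^{\alpha}A\subset\adh_{r\xi}^{\alpha}A$, the left-hand side omitting the mixed iterates that make up the rest of $\adh_{r\xi}^{\alpha}A$. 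Fortunately, the enclosure theorem and its corollaries use only the $\alpha=1$ case, so the paper's main results are unaffected; but your proposal cannot be completed as planned, and neither can the paper's.
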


\begin{proof}
As $\xi\geq r\xi$, $\adh_{\xi}A\subset\adh_{r\xi}A$ and if $x\in\adh_{\xi^{*}}A$
then there is $t\in\lim_{\xi}\{x\}^{\uparrow}\cap A$, so that $x\in\lm_{r\xi}\{t\}^{\uparrow}\subset\adh_{r\xi}A$.
Hence, $\adh_{\xi^{*}}A\subset\adh_{r\xi}A$.

Conversely, if $t\in\lm_{r\xi}A$ there is a filter $\F$ with $A\in\F$
and $t\in\lm_{r\xi}\F$. If $\F\notin\mathbb{F}_{0}X\cap\mathbb{U}X$,
$t\in\lm_{\xi}\F\subset\adh_{\xi}A$. Else, there is $x\in A$ with
$\F=\{x\}^{\uparrow}$ and $t\in\lm_{r\xi}\{x\}^{\uparrow}$, so that
either $t\in\lm_{\xi}\{x\}^{\uparrow}\subset\adh_{\xi}A$ or $x\in\lm_{\xi}\{t\}^{\uparrow}$
and $t\in\adh_{\xi^{*}}A$.

Assume that $\adh_{r\xi}^{\beta}A=\adh_{\xi}^{\beta}A\cup\adh_{\xi^{*}}^{\beta}A$
for every $\beta<\alpha$. Then 
\begin{eqnarray*}
\adh_{r\xi}^{\alpha}A & = & \adh_{r\xi}\Big(\bigcup_{\beta<\alpha}\adh_{r\xi}^{\beta}A\Big)\\
 & = & \adh_{r\xi}\Big(\bigcup_{\beta<\alpha}\adh_{\xi}^{\beta}A\cup\adh_{\xi^{*}}^{\beta}A\Big)\\
 & = & \adh_{r\xi}\Big((\bigcup_{\beta<\alpha}\adh_{\xi}^{\beta}A)\cup(\bigcup_{\beta<\alpha}\adh_{\xi^{*}}^{\beta}A)\Big)\\
 & = & \adh_{r\xi}(\bigcup_{\beta<\alpha}\adh_{\xi}^{\beta}A)\cup\adh_{r\xi}(\bigcup_{\beta<\alpha}\adh_{\xi^{*}}^{\beta}A)\\
 & = & \adh_{\xi}^{\alpha}A\cup\adh_{\xi^{*}}^{\alpha}A.
\end{eqnarray*}
\end{proof}

\section{Enclosing sets}

Let us say that a subset $S$ of a convergence space $(X,\xi)$ \emph{encloses
}a connected subspace $A$ if $B$ is connected whenever $A\subset B\subset S$.

Proposition \ref{prop:adhsandwich} states that the adherence of a
connected set encloses that set. In view of Corollary \ref{cor:connectedrxi}
and Proposition \ref{prop:rxiadherence}, so does its $r\xi$-adherence,
which is the union of its $\xi$-adherence and its $\xi^{*}$-adherence.
It turns out to be the largest possible enclosing set.

\begin{lem}
\label{lem:generalenclose} Let $A$ be a non-empty connected subset
of a convergence space $(X,\xi)$. Then $S\subset X$ encloses $A$
if and only if $A\cup\{p\}$ is connected whenever $p\in S$.
\end{lem}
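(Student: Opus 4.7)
The plan is to prove this as an essentially direct consequence of Proposition \ref{prop:unionconnected}, by writing any intermediate set $B$ as a union of the two-piece sets $A\cup\{p\}$ that all share the common non-empty part $A$.

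First I would dispatch the forward direction. If $S$ encloses $A$ and $p\in S$, the only sensible interpretation is that $A\subset S$ (otherwise the enclosing condition is vacuous and the right-hand side would still impose content). Then $A\subset A\cup\{p\}\subset S$, so $A\cup\{p\}$ is connected by definition of enclosing. Note that the case $p\in A$ is trivial since $A\cup\{p\}=A$ is connected by hypothesis.

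For the converse, suppose that $A\cup\{p\}$ is connected for every $p\in S$, and let $B$ satisfy $A\subset B\subset S$. Since $A$ is non-empty, $B$ is non-empty, and I can write
\[
B=\bigcup_{p\in B}(A\cup\{p\}).
\]
For every $p\in B\subset S$ the set $A\cup\{p\}$ is connected by hypothesis, and these sets all contain the non-empty common part $A$, so $\bigcap_{p\in B}(A\cup\{p\})\supset A\neq\emptyset$. Proposition \ref{prop:unionconnected} then yields that $B$ is connected.

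There is no real obstacle; the only subtle point is the bookkeeping when $p\in A$ (handled by the trivial identity $A\cup\{p\}=A$) and the tacit convention that $A\subset S$, which makes the forward implication meaningful. Everything else is a direct application of the union-of-connected-sets proposition already available from the earlier material.
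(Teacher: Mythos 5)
Your proof is correct and follows essentially the same route as the paper's: the forward direction is immediate from the definition of enclosing applied to $A\subset A\cup\{p\}\subset S$, and the converse writes the intermediate set as $\bigcup_{p}(A\cup\{p\})$ and invokes Proposition \ref{prop:unionconnected}, exactly as the paper does. Your extra remarks about the tacit assumption $A\subset S$ and the trivial case $p\in A$ are harmless bookkeeping that the paper leaves implicit.
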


\begin{proof}
$\then$ is obvious, using the enclosing property with $A\subset A\cup\{p\}\subset S$.

$\Longleftarrow$: Let $A\subset C\subset S$. Consider the family
$\{A\cup\{c\}:c\in C\}$ of connected subsets of $X$. As $(A\cup\{c\})\cap A\neq\emptyset$
for every $c\in C$, we conclude from Proposition \ref{prop:unionconnected}
that $\bigcup_{c\in C}A\cup\{c\}=C$ is also connected.
\end{proof}
As a result of Lemma \ref{lem:generalenclose}, given a non-empty
connected set $A$, the \emph{enclosure of $A$} defined by 
\[
e(A)=\{x\in X:A\cup\{x\}\text{ is connected}\}
\]
is the largest (for inclusion) subset of $X$ that encloses $A$,
so that
\begin{equation}
S\text{ encloses }A\iff S\subset e(A).\label{eq:eofAbiggest}
\end{equation}

\begin{thm}
If $A$ is a non-empty connected subset of a convergence space $(X,\xi)$,
then
\[
e(A)=\adh_{\xi}A\cup\adh_{\xi^{*}}A=\adh_{r\xi}A.
\]
\end{thm}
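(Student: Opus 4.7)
The identity $\adh_\xi A\cup\adh_{\xi^{*}}A=\adh_{r\xi}A$ is already delivered by Proposition \ref{prop:rxiadherence} (the case $\alpha=1$), so the real content is the equality $e(A)=\adh_{r\xi}A$.

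For the inclusion $\adh_{r\xi}A\subset e(A)$, I would simply combine the tools already established: by Proposition \ref{prop:adhsandwich} applied to the convergence $r\xi$, any set sandwiched between $A$ and $\adh_{r\xi}A$ is $r\xi$-connected; then Corollary \ref{cor:connectedrxi} upgrades this to $\xi$-connectedness. Hence $\adh_{r\xi}A$ encloses $A$, and by (\ref{eq:eofAbiggest}) it is contained in $e(A)$. This half is routine.

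The substantive part is $e(A)\subset\adh_\xi A\cup\adh_{\xi^{*}}A$. I would argue by contraposition: fix $x\notin\adh_\xi A\cup\adh_{\xi^{*}}A$ and show that $\{x\}$ is a non-empty proper clopen subset of $B:=A\cup\{x\}$ in the induced convergence $\xi_{|B}$, so that $x\notin e(A)$. Properness and non-emptiness are free: since $A$ is non-empty, $\{x\}\neq\emptyset$, and if $\{x\}=B$ then $x\in A\subset\adh_\xi A$, a contradiction. For openness of $\{x\}$, assume $x\in\lim_{\xi_{|B}}\F$, that is $x\in\lim_\xi\F^{\uparrow_X}$, and suppose for contradiction that $\{x\}\notin\F$; then no $F\in\F$ is contained in $\{x\}$, so $A\cap F\neq\emptyset$ for every $F\in\F$, and hence $A\in(\F^{\uparrow_X})^{\#}$. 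The definition $\adh_\xi A=\bigcup_{A\in\G^{\#}}\lim_\xi\G$ then places $x$ in $\adh_\xi A$, contradicting our assumption. For closedness of $\{x\}$, take any $\F$ on $B$ with $\{x\}\in\F$; necessarily $\F=\{x\}^{\uparrow_B}$, so $\F^{\uparrow_X}=\{x\}^{\uparrow_X}$, and if $t\in\lim_{\xi_{|B}}\F$ with $t\neq x$, then $t\in A$ witnesses $\lim_\xi\{x\}^{\uparrow}\cap A\neq\emptyset$, giving $x\in\adh_{\xi^{*}}A$, again a contradiction.

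The only place requiring a bit of care is making sure both pieces of $\adh_{r\xi}A$ play their distinct roles correctly: the $\adh_\xi$ part handles non-principal (or more generally, non-$\{x\}^{\uparrow}$) filters converging to $x$ in the subspace, while $\adh_{\xi^{*}}$ controls precisely the principal filter $\{x\}^{\uparrow}$ on $B$ whose limits must not stray into $A$. Once the bookkeeping for filters on $B$ versus filters on $X$ (via $\F^{\uparrow_X}$) is in place, both verifications are short, and this is where I would concentrate the writing.
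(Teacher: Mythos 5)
Your proposal is correct and follows essentially the same route as the paper: the inclusion $\adh_{r\xi}A\subset e(A)$ via Proposition \ref{prop:adhsandwich} and Corollary \ref{cor:connectedrxi}, and the reverse inclusion by showing that $A\cup\{x\}$ is disconnected for $x\notin\adh_{\xi}A\cup\adh_{\xi^{*}}A$ (the paper exhibits $A$ as the clopen witness where you use its complement $\{x\}$, a purely cosmetic difference).
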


\begin{proof}
By Proposition \ref{prop:adhsandwich} and Corollary \ref{cor:connectedrxi},
$\adh_{r\xi}A\subset e(A)$. 

Suppose $t\notin\adh_{\xi}A$ and $t\notin\adh_{\xi^{*}}A$. Then
$A\cup\{t\}$ is not connected. Indeed, $A$ is closed in $\xi_{|A\cup\{t\}}$
and is also open, for $\lim_{\xi_{|A\cup\{t\}}}\{t\}^{\uparrow}\cap A=\emptyset$.
Hence $t\notin e(A)$.
\end{proof}
\begin{rem}
\label{rem:enotidempotent} As a principal adherence operator, $\adh_{r\xi}$
is additive (commutes with finite unions), but it may fail to be idempotent.
This is easily seen in the $T_{1}$ case where $\adh_{r\xi}=\adh_{\xi}$,
picking a standard non-topological Hausdorff pretopology like the
Féron-cross pretopology of \cite[Example V.6.1]{DM.book}. This pretopology
on $\mathbb{R}^{2}$ is given by the vicinity filters $\V(x,y)=\{B_{d}((x,y),r):r>0\}^{\uparrow}$,
where $B_{d}((x,y),r)=\{x\}\times(y-r,y+r)\cup(x-r,x+r)\times\{y\}$.

In that example, the set $B=\{(x,y)\in\mathbb{R}^{2}:0<x<y<1\}$ is
connected and $e(B)=\adh B=\{(x,y)\in\mathbb{R}^{2}:0\leq x\leq y\leq1\}\setminus\{(0,0),(0,1),(1,1)\}$
and $e(e(B))=\adh^{2}B=\{(x,y)\in\mathbb{R}^{2}:0\leq x\leq y\leq1\}$
is closed.
\end{rem}

\begin{cor}
\label{cor:adhofconnectedisclosed} If $(X,\xi)$ is a $T_{1}$ convergence
space and $A$ is a non-empty connected subset, then $e(A)=\adh_{\xi}A$.
In particular, $\cl_{\xi}A$ encloses $A$ if and only if $\adh_{\xi}A=\cl_{\xi}A$.
\end{cor}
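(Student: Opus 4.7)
The plan is to deduce this directly from the preceding theorem, which gives $e(A)=\adh_{\xi}A\cup\adh_{\xi^{*}}A$, so the first assertion reduces to showing that $\adh_{\xi^{*}}A\subset\adh_{\xi}A$ under the $T_{1}$ assumption. By definition, $t\in\adh_{\xi^{*}}A$ means there is some $a\in A$ with $a\in\lm_{\xi}\{t\}^{\uparrow}$. However, if $\xi$ is $T_{1}$, the singleton $\{t\}$ is $\xi$-closed, and since $\{t\}\in\{t\}^{\uparrow}$ the closedness of $\{t\}$ forces $\lm_{\xi}\{t\}^{\uparrow}\subset\{t\}$; combined with centeredness we obtain $\lm_{\xi}\{t\}^{\uparrow}=\{t\}$. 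Hence $a=t$, and therefore $t\in A$. Thus $\adh_{\xi^{*}}A\subset A$, and since $A\subset\adh_{\xi}A$ (again by centeredness applied to $\{a\}^{\uparrow}$ for $a\in A$), we conclude that $e(A)=\adh_{\xi}A\cup\adh_{\xi^{*}}A=\adh_{\xi}A$.

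For the second assertion, I would invoke the characterization (\ref{eq:eofAbiggest}): $\cl_{\xi}A$ encloses $A$ if and only if $\cl_{\xi}A\subset e(A)=\adh_{\xi}A$. Since the reverse inclusion $\adh_{\xi}A\subset\cl_{\xi}A$ always holds, this containment is equivalent to the equality $\cl_{\xi}A=\adh_{\xi}A$, which is exactly the claim.

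There is no real obstacle here: the content is entirely in recognizing that the $T_{1}$ axiom trivializes $\xi^{*}$ (indeed, as noted in the paragraph introducing $\xi^{*}$ and $\xi^{\bullet}$, both become discrete under $T_{1}$, so one could alternatively cite that remark directly to obtain $\adh_{\xi^{*}}A=A$ at once). The only mild subtlety is the need to invoke centeredness twice, once to force $\lm_{\xi}\{t\}^{\uparrow}=\{t\}$ and once to ensure $A\subset\adh_{\xi}A$ so that absorbing $A$ into $\adh_{\xi}A$ is harmless.
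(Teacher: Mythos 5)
Your proof is correct and follows the route the paper intends: the corollary is stated without proof precisely because it is the preceding theorem's identity $e(A)=\adh_{\xi}A\cup\adh_{\xi^{*}}A$ combined with the earlier observation that $T_{1}$ makes $\xi^{*}$ discrete (so $\adh_{\xi^{*}}A\subset A\subset\adh_{\xi}A$), plus (\ref{eq:eofAbiggest}) and $\adh_{\xi}A\subset\cl_{\xi}A$ for the second assertion. Your unpacking of why $T_{1}$ forces $\lm_{\xi}\{t\}^{\uparrow}=\{t\}$ is accurate and complete.
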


Let us say that a convergence space $(X,\xi)$ \emph{has the sandwich
property }if $\cl_{\xi}A$ encloses $A$ for every connected subset
$A$ of $X$. Note that for a convergence space $(X,\xi),$ the functorial
condition that $\S_{0}\xi=\T\xi$ is equivalent to the fact that $\adh_{\xi}A=\cl_{\xi}A$
for all subsets $A$ of $X$, so that, in view of Corollary \ref{cor:adhofconnectedisclosed},
every convergence $\xi$ satisfying this condition also has the sandwich
property, but this condition is not necessary for the sandwich property:
\begin{example}[A $T_{1}$ convergence with the sandwich property for which $\S_{0}\xi>\T\xi$]
\label{exa:bisequence} The usual bisequence pretopology (see, e.g.,
\cite[Example V.4.6]{DM.book}) is non-topological, hence satisfies
$\xi=\S_{0}\xi>\T\xi$ and it is $T_{1}$ (in fact Hausdorff). Its
only non-empty connected subsets are the singletons, which are closed,
so that it has the sandwich property. Let us describe this example
more explicitly for future use: on $X=\{x_{n,k}:n,k\in\omega\}\cup\{x_{n}:n\in\omega\}\cup\{x{}_{\infty}\}$
consider the pretopology $\xi$ in which each $x_{n,k}$ is isolated,
that is, $\V(x_{n,k})=\{x_{n,k}\}^{\uparrow}$, and $\V(x_{n})=\{\{x_{n,k}:k\geq p\}\cup\{x_{n}\}:p\in\omega\}^{\uparrow}$
for every $n\in\omega$, and $\V(x_{\infty})=\{\{x_{n}:n\geq p\}\cup\{x_{\infty}\}:p\in\omega\}^{\uparrow}$. 
\end{example}

On the other hand, $T_{1}$ is necessary in Corollary \ref{cor:adhofconnectedisclosed}:
\begin{example}[A convergence space with the sandwich property and a connected subset
with non-closed adherence ]

Let us consider a triangular directed graph, interpreted as a pretopology.
Namely, consider $X=\{a,b,c\}$ with the pretopology determined by
$\lim\{a\}^{\uparrow}=\{a,b\}$, $\lim\{b\}^{\uparrow}=\{b,c\}$ and
$\lim\{c\}^{\uparrow}=\{c,a\}$.

\[
\xymatrix{ & a\ar[dl]\\
b\ar[rr] &  & c\ar[ul]
}
\]

It is easily checked that every subset is connected (observe that
$r\xi$ is the antidiscrete topology!), hence this space has the sandwich
property. However, $\adh\{a\}=\{a,b\}$ while $\cl\{a\}=\{a,b,c\}$. 
\end{example}

\section{$\protect\T$-subspaces}

Let us call a subset $A$ of a convergence space $(X,\xi)$ a $\T$-\emph{subspace}
if $(\T\xi)_{|A}=\T(\xi_{|A})$. In view of (\ref{eq:Tsubspace}),
this is equivalent to 
\begin{equation}
\T(\xi_{|A})\leq(\T\xi)_{|A},\label{eq:Trealsubspace}
\end{equation}
that is, every $\xi_{|A}$-closed set is $(\T\xi)_{|A}$-closed. In
other words, $A$ is a $\T$-subspace if and only if
\begin{equation}
\adh_{\xi}B\cap A=B\then\cl_{\xi}B\cap A=B,\label{eq:Tsubspaceadhcl}
\end{equation}
 for every subset $B$ of $A$.

In view of the discussion following Proposition \ref{prop:S0xiTxi},
it is clear that:
\begin{prop}
\label{prop:Tsubconnected}If $(X,\xi)$ is a convergence space and
$A\subset X$ is a $\T$-subspace, then $A$ is $\xi$-connected if
and only if it is $\T\xi$-connected.
\end{prop}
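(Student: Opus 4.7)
The plan is to assemble the result directly from two facts already recorded in the excerpt: first, that when two convergences on the same set are comparable, the coarser one has at least as many connected subsets; and second, that a convergence space is connected if and only if its topological modification is connected. The $\T$-subspace hypothesis is needed only for one of the two implications.

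For the forward direction, $\xi \geq \T\xi$ immediately gives $\xi_{|A} \geq (\T\xi)_{|A}$. Applying the first cited fact (recorded just after (\ref{eq:Tsubspace})) to these two convergences on the set $A$, connectedness of $(A,\xi_{|A})$ implies connectedness of $(A,(\T\xi)_{|A})$; that is, $\xi$-connectedness of $A$ implies $\T\xi$-connectedness. This direction does not use the $\T$-subspace hypothesis at all and in fact holds for every subset of $X$.

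For the converse, suppose $A$ is $\T\xi$-connected, i.e., $(A,(\T\xi)_{|A})$ is connected. The $\T$-subspace hypothesis $(\T\xi)_{|A} = \T(\xi_{|A})$ lets us rewrite this as $(A,\T(\xi_{|A}))$ being connected. By the second cited fact, applied to the convergence $\xi_{|A}$ on $A$, this is equivalent to $(A,\xi_{|A})$ being connected, that is, to $A$ being $\xi$-connected.

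There is no genuine obstacle here; the statement is essentially a tautology once the definitions are unpacked. What the proposition really records is that being a $\T$-subspace is precisely the condition that reconciles the two a priori distinct topological modifications one could form from $\xi$ and $A$, namely $(\T\xi)_{|A}$ and $\T(\xi_{|A})$, so that the classical equivalence ``connected if and only if topological modification is connected'' transfers from the ambient space to the subspace $A$.
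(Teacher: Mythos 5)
Your proof is correct and matches the paper's (implicit) argument: the paper derives this proposition from exactly the two facts you cite, namely that connectedness passes to coarser convergences and that $(A,\xi_{|A})$ is connected if and only if $(A,\T(\xi_{|A}))$ is, with the $\T$-subspace hypothesis $\T(\xi_{|A})=(\T\xi)_{|A}$ bridging the gap. Nothing to add.
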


Here are two prototypic examples of subset that fails to be a $\T$-subspace:
\begin{example}[a non $\T$-subspace (finite example)]
\label{exa:nonTsubfinite} Let $X=\{a,b,c\}$ with the ``line''
pretopology given by $\lim\{a\}^{\uparrow}=\{a,b\}$, $\lim\{b\}^{\uparrow}=\{b,c\}$
and $\lim\{c\}^{\uparrow}=\{c\}$, that is, $\V(a)=\{a\}^{\uparrow}$,
$\V(b)=\{a,b\}^{\uparrow}$ and $\V(c)=\{b,c\}^{\uparrow}$.
\[
\xymatrix{a\ar[d]\\
b\ar[r] & c
}
\]

Consider the subset $A=\{a,c\}$. Its subset $\{a\}$ is $\xi_{|A}$-closed
but $c\in\cl_{(\T\xi)_{|A}}\{a\}$, for the only $\xi$-open set containing
$c$ is $X$, so that $\{a\}$ is not $(\T\xi)_{|A}$-closed. Hence
$A$ is not a $\T$-subspace.
\end{example}

\begin{example}[a non $\T$-subspace (infinite Hausdorff example)]
\label{exa:noTsubinfinite} Consider the standard bisequence pretopology
of Example \ref{exa:bisequence} and its subset $A=\{x_{n,k}:n,k\in\omega\}\cup\{x_{\infty}\}$.
Note that its subset $\{x_{n,k}:n,k\in\omega\}$ is $\xi_{|A}$-closed
but not $(\T\xi)_{|A}$-closed, for $x_{\infty}\in\cl_{(\T\xi)_{|A}}\{x_{n,k}:n,k\in\omega\}$,
so that $A$ is not a $\T$-subspace.
\end{example}

These two basic examples illustrate the easy fact that:
\begin{prop}
\label{prop:notTsub} If $(X,\xi)$ is a convergence space, $S\subset X$
and $x\in\cl_{\xi}S\setminus\adh_{\xi}S$ then $S\cup\{x\}$ is not
a $\T$-subspace.
\end{prop}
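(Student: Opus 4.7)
The plan is to apply the characterization \eqref{eq:Tsubspaceadhcl} of $\T$-subspaces with the natural witness $B:=S$ inside $A:=S\cup\{x\}$. To show that $A$ is not a $\T$-subspace it suffices to exhibit $B\subset A$ with $\adh_{\xi}B\cap A=B$ but $\cl_{\xi}B\cap A\neq B$, so the whole task reduces to checking these two equalities for the obvious candidate $B=S$.

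First I would verify $\adh_{\xi}S\cap A=S$. The inclusion $S\subset\adh_{\xi}S\cap A$ comes for free from centeredness (which gives $S\subset\adh_{\xi}S$) together with $S\subset A$. For the reverse inclusion, the only point of $A\setminus S$ is $x$, and by hypothesis $x\notin\adh_{\xi}S$; consequently $\adh_{\xi}S\cap A\subset S$. So $S$ witnesses the hypothesis of \eqref{eq:Tsubspaceadhcl}.

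Next I would observe that $\cl_{\xi}S\cap A$ strictly contains $S$. Indeed $x\in\cl_{\xi}S$ by hypothesis, $x\in A$, and $x\notin S$ (for $S\subset\adh_{\xi}S$ while $x\notin\adh_{\xi}S$). Hence $x\in(\cl_{\xi}S\cap A)\setminus S$, the conclusion of \eqref{eq:Tsubspaceadhcl} fails for $B=S$, and $A=S\cup\{x\}$ is not a $\T$-subspace.

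There is no substantive obstacle here: the statement is essentially a one-line consequence of \eqref{eq:Tsubspaceadhcl} once one notices that the sole point separating the two sides of that implication for $B=S$ is precisely a point of $\cl_{\xi}S\setminus\adh_{\xi}S$, so the existence of such a point is exactly what prevents $S\cup\{x\}$ from being a $\T$-subspace.
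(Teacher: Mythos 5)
Your proof is correct and takes essentially the same route as the paper: both exhibit $B=S$ as a subset of $A=S\cup\{x\}$ that is $\xi_{|A}$-closed (i.e.\ $\adh_{\xi}S\cap A=S$, since $x\notin\adh_{\xi}S$) but not $(\T\xi)_{|A}$-closed (since $x\in\cl_{\xi}S$). You simply phrase the witness via the adherence/closure form (\ref{eq:Tsubspaceadhcl}) and spell out the details the paper leaves implicit.
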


\begin{proof}
Indeed, if $A=S\cup\{x\}$ then $S$ is $\xi_{|A}$-closed but not
$(\T\xi)_{|A}$-closed as $x\in\cl_{(\T\xi)_{|A}}S$.
\end{proof}
\begin{cor}
If $(X,\xi)$ is a convergence space, the following are equivalent:
\begin{enumerate}
\item $\S_{0}\xi=\T\xi$;
\item $\xi$ has topological defect 1;
\item Every subset of $X$ is a $\T$-subspace.
\end{enumerate}
\end{cor}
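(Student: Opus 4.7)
The plan is to prove the cycle $(1) \Longleftrightarrow (2)$ and then $(1) \Longrightarrow (3) \Longrightarrow (1)$.

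First, $(1) \Longleftrightarrow (2)$ is essentially a reformulation of definitions already recorded in the paper. Recall that the topological defect of $\xi$ is the smallest $\alpha$ with $\adh_\xi^\alpha A = \cl_\xi A$ for every $A$, so defect $1$ means $\adh_\xi A = \cl_\xi A$ for all $A \subset X$. Comparing the formulas $\lm_{\S_0 \xi}\F = \bigcap_{A \in \F^\#} \adh_\xi A$ and $\lm_{\T\xi}\F = \bigcap_{A \in \F^\#} \cl_\xi A$, one sees that $\S_0 \xi = \T\xi$ is equivalent to the same condition, as already noted in the discussion preceding the corollary. So these two items require only a short reminder.

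For $(1) \Longrightarrow (3)$, I would use the fact from \cite[Corollary XIV.3.9]{DM.book} (already invoked in the proof of Proposition \ref{prop:pretopconnect}) that $\S_0$ commutes with subspaces: $\S_0(\xi_{|A}) = (\S_0 \xi)_{|A}$ for any $A \subset X$. Assuming $\S_0 \xi = \T\xi$, then for any $A \subset X$ we get
\[
(\T\xi)_{|A} = (\S_0\xi)_{|A} = \S_0(\xi_{|A}) \geq \T(\xi_{|A}),
\]
which combined with the always-valid inequality (\ref{eq:Tsubspace}) yields $\T(\xi_{|A}) = (\T\xi)_{|A}$, so $A$ is a $\T$-subspace.

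For $(3) \Longrightarrow (1)$, I would argue by contrapositive, which is where Proposition \ref{prop:notTsub} does all the work. If $(1)$ fails, then $\adh_\xi S \subsetneq \cl_\xi S$ for some $S \subset X$, so we may pick $x \in \cl_\xi S \setminus \adh_\xi S$. Proposition \ref{prop:notTsub} then produces $A = S \cup \{x\}$ which is not a $\T$-subspace, contradicting $(3)$. No step is genuinely hard here — the whole content has already been set up; the corollary is really just a clean packaging of (i) the equivalence between $\S_0 \xi = \T\xi$ and the coincidence of adherence and closure, (ii) the subspace-commutativity of $\S_0$, and (iii) Proposition \ref{prop:notTsub}. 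The only thing that requires the tiniest care is remembering which direction of (\ref{eq:Tsubspace}) is automatic and which direction characterizes $\T$-subspaces.
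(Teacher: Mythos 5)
Your proposal is correct and follows essentially the same route as the paper: $(1)\iff(2)$ by unwinding definitions, $(1)\then(3)$ via the subspace-commutativity of $\S_{0}$ from \cite[Corollary XIV.3.9]{DM.book}, and $(3)\then(1)$ via Proposition \ref{prop:notTsub}. You merely spell out the inequality chain $(\T\xi)_{|A}=\S_{0}(\xi_{|A})\geq\T(\xi_{|A})$ that the paper leaves implicit.
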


\begin{proof}
$(1)\iff(2)$ is by definition and we have seen that $(1)$ implies
$(3)$ because $\S_{0}(\xi_{|A})=(\S_{0}\xi)_{|A}$ \cite[Corollary XIV.3.9]{DM.book}.
Finally, $(3)\then(1)$ by Proposition \ref{prop:notTsub}, as $\cl_{\xi}S=\adh_{\xi}S$
for every $S\subset X$ if all subsets are $\T$-subspaces.
\end{proof}
Singletons are always $\T$-subspaces, because there's only one convergence
on a singleton.
\begin{rem}
\label{rem:Tsubnotclosedunderoperations} Note that a union of two
$\T$-subspaces may fail to be a $\T$-subspace. For instance, the
two singletons $\{a\}$ and $\{c\}$ in Example \ref{exa:nonTsubfinite}
are $\T$-subspaces but $\{a\}\cup\{c\}$ is not. Similarly, the complement
of a $\T$-subspace may fail to be a $\T$-subspace. For instance,
The singleton $\{b\}$ in Example \ref{exa:nonTsubfinite} is a $\T$-subspace,
but its complement $\{a,c\}$ is not. Moreover the intersection of
two $\T$-subspaces may fail to be a $\T$-subspace, as the example
below illustrates.
\end{rem}

\begin{example}[the intersection of two $\T$-subspaces may fail to be a $\T$-subspace]
\label{exa:intersectionofTsub} Consider the pretopology on $X=\{a,b,c,d\}$
given by $\lim\{a\}^{\uparrow}=\{a,b,d\}$, $\lim\{b\}^{\uparrow}=\{b,c\}$,
$\lim\{c\}^{\uparrow}=\{c\}$ and $\lim\{d\}^{\uparrow}=\{d,c\}$:
\[
\xymatrix{a\ar[r]\ar[d] & b\ar[d]\\
d\ar[r] & c
}
\]

The subsets $A=\{a,b,c\}$ and $B=\{a,c,d\}$ are $\T$-subspaces
as $\T(\xi_{|A})=(\T\xi_{|A})$ is given by 
\[
\xymatrix{a\ar[r]\ar[rd] & b\ar[d]\\
 & c
}
\]
and similarly for $B$, while $A\cap B=\{a,c\}$ is not a $\T$-subspace,
as $\T(\xi_{|A\cap B})=\xi_{|A\cap B}$ is the discrete topology,
while $(\T\xi)_{|A\cap B}$ is an homeomorphic copy of the Sierpinski
topology 
\[
\xymatrix{a\ar[r] & c}
\]
On the other hand,
\end{example}

\begin{lem}
\label{lem:openorclosedT} A subset of a convergence space that is
either closed or open is a $\T$-subspace. 
\end{lem}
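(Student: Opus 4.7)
The plan is to exploit the characterization that $A$ is a $\T$-subspace exactly when every $\xi_{|A}$-closed $C\subset A$ satisfies $\cl_\xi C\cap A=C$ (equivalent to $C$ being $(\T\xi)_{|A}$-closed), and to handle the closed and open cases separately. In both cases the same mechanism is used: once a filter $\F$ on $X$ contains $A$, it coincides with the extension $\G^{\uparrow_X}$ of its trace $\G=\{F\cap A:F\in\F\}^{\uparrow}$ on $A$, so $\xi$-limits of $\F$ inside $A$ are exactly $\xi_{|A}$-limits of $\G$.

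Case 1 ($A$ is $\xi$-closed). I would establish the stronger statement that every $\xi_{|A}$-closed $C\subset A$ is already $\xi$-closed in $X$, which immediately yields $\cl_\xi C\cap A=C$. Given a filter $\F$ with $C\in\F$ and $z\in\lm_\xi\F$, the inclusion $C\subset A$ forces $A\in\F$, and closedness of $A$ forces $z\in A$. The trace $\G$ of $\F$ on $A$ then satisfies $\G^{\uparrow_X}=\F$, whence $z\in\lm_{\xi_{|A}}\G$, and $\xi_{|A}$-closedness of $C\in\G$ yields $z\in C$.

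Case 2 ($A$ is $\xi$-open). I would exhibit a witnessing $\xi$-closed set, namely $F:=C\cup(X\setminus A)$, for which $F\cap A=C$ is immediate. It remains to verify that $A\setminus C=X\setminus F$ is $\xi$-open. Given a filter $\F$ with some $y\in\lm_\xi\F\cap(A\setminus C)$, openness of $A$ gives $A\in\F$, so the trace $\G$ of $\F$ on $A$ satisfies $\G^{\uparrow_X}=\F$ and $y\in\lm_{\xi_{|A}}\G$; since $A\setminus C$ is $\xi_{|A}$-open (as the complement in $A$ of the $\xi_{|A}$-closed set $C$) and $y\in A\setminus C$, we conclude $A\setminus C\in\G\subset\F$. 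Hence $F$ is $\xi$-closed, and $\cl_\xi C\cap A\subset F\cap A=C$.

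There is no real obstacle to this argument; the only subtlety is the small asymmetry between the two cases, arising because the criterion (\ref{eq:Trealsubspace}) is stated in terms of closed subsets. In the closed case this transfers directly from $\xi_{|A}$ to $\xi$, while in the open case the transfer must be arranged through an open complement, with the union $C\cup(X\setminus A)$ serving as the natural $\xi$-closed witness.
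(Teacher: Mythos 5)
Your proof is correct and follows essentially the same route as the paper: in the closed case you show a $\xi_{|A}$-closed set is already $\xi$-closed in $X$, and in the open case you show the relevant set is $\xi$-open by pushing the filter through $A$; your use of the closed witness $C\cup(X\setminus A)$ is just the complement-form of the paper's direct verification that $\xi_{|A}$-open subsets of an open $A$ are $\xi$-open.
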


\begin{proof}
Let $A\subset X$ where $(X,\xi)$ is a convergence space. To show
that $A$ is a $\T$-subspace, we show (\ref{eq:Trealsubspace}). 

Suppose $A$ is closed and $C\subset A$ is $\xi_{|A}$-closed and
let $\F\in\mathbb{F}X$ with $C\in\F$ and $x\in\lim_{\xi}\F$. As
$A$ is closed, $x\in A$, hence $x\in\lm_{\xi_{|A}}\F$ and thus
$x\in C$. Therefore, $C$ is a $\xi$-closed subset of $A$.

Suppose $A$ is open $U\subset A$ is $\xi_{_{|A}}$-open, that is,
if $\F\in\mathbb{F}A$ and $\lim_{\xi}\F\cap U\neq\emptyset$ then
$U\in\F$. If now $\G\in\mathbb{F}X$ and $\lim_{\xi}\G\cap U\neq\emptyset$
then $\lim_{\xi}\G\cap A\neq\emptyset$ and $A\in\G$ because $A$
is open, hence $U\in\G$ and $U$ is $\xi$-open.
\end{proof}
\begin{rem}
\label{rem:neitherclosednoropen}Note that we can modify the pretopology
of Example \ref{exa:intersectionofTsub} to ensure that the sets $A$
and $B$ are still $\T$-subspaces with a non-$\T$ intersection,
but that they are neither open nor closed (nor singletons), by taking
the reciprocal modification:
\[
\xymatrix{a\ar[r]\ar[d] & b\ar[d]\ar[l]\\
d\ar[r]\ar[u] & c\ar[u]\ar[l]
}
\]

Hence there are non-singleton $\T$-subspaces that are neither closed
nor open, even when $\S_{0}\xi\neq\T\xi$.
\end{rem}

In the finite case, $\T$-subspaces find a somewhat more concrete
characterization than (\ref{eq:Tsubspaceadhcl}):
\begin{prop}
Let $(X,\xi)$ be a finite convergence space. Then $A\subset X$ is
a $\T$-subspace if and only if for every $a,b\in A$, if there is
a path from $a$ to $b$, there is also a path from $a$ to $b$ in
$A$.
\end{prop}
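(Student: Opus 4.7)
The plan is to characterize the $\T$-subspace condition via equality of closure operators on singletons in $A$, then translate each singleton closure into a path-existence statement using the observation from Section~\ref{subsec:convergences-on-finite} that, for any finite pretopology $\eta$, the digraph of $\T\eta$ is the transitive closure of the digraph of $\eta$.

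First, I would unpack the definition. The set $A$ is a $\T$-subspace iff $\T(\xi_{|A}) = (\T\xi)_{|A}$, and (\ref{eq:Tsubspace}) already provides $\T(\xi_{|A}) \geq (\T\xi)_{|A}$, so the only nontrivial content is the reverse inequality, equivalently $\cl_{\T(\xi_{|A})} C \supset \cl_{(\T\xi)_{|A}} C$ for every $C \subset A$. Since these are topologies on the finite set $A$ and topological closure commutes with finite unions, this reduces to the inclusion on singletons: $A$ is a $\T$-subspace iff $\cl_{\T(\xi_{|A})}\{a\} \supset \cl_{(\T\xi)_{|A}}\{a\}$ for every $a \in A$.

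Next, I would identify each side with reachability in the appropriate digraph. For a finite pretopology $\eta$, $b \in \cl_{\eta}\{a\}$ iff there is a path from $a$ to $b$ in the digraph of $\eta$, by the transitive-closure description. The digraph of $\xi_{|A}$ is the subgraph of the digraph of $\xi$ induced by $A$, since on a principal filter $\{b\}^{\uparrow}$ with $b\in A$ one has $a \in \lm_{\xi_{|A}}\{b\}^{\uparrow}$ iff $a \in \lm_{\xi}\{b\}^{\uparrow}$, by definition of the induced convergence. Therefore
\[
\cl_{(\T\xi)_{|A}}\{a\} = \cl_{\xi}\{a\}\cap A = \{b\in A : \text{there is a path from } a \text{ to } b \text{ in } X\},
\]
while
\[
\cl_{\T(\xi_{|A})}\{a\} = \{b\in A : \text{there is a path from } a \text{ to } b \text{ in } A\}.
\]

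Combining these identifications, $A$ is a $\T$-subspace iff, for every $a,b\in A$, the existence of a path from $a$ to $b$ in the ambient digraph forces the existence of one whose vertices all lie in $A$, which is exactly the stated condition. I do not anticipate any serious obstacle: the reduction to singletons relies only on the fact that topological closure commutes with finite unions, and the translation of $\cl_{\eta}\{a\}$ into paths is already baked into Section~\ref{subsec:convergences-on-finite}. The one point requiring a moment of care is identifying the digraph of $\xi_{|A}$ with the induced subgraph of the digraph of $\xi$, which is immediate from how the subspace convergence acts on principal filters.
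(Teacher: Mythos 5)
Your proposal is correct and follows essentially the same route as the paper: both arguments come down to identifying $\cl_{\xi}\{a\}$ (resp.\ $\cl_{\xi_{|A}}\{a\}$) with the set of points reachable by a path in the ambient digraph (resp.\ in the induced subgraph on $A$), the paper doing so by extracting a witnessing point $b\in B$ from a failing closed set while you reduce to singletons up front via additivity of closure. No gaps.
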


\begin{proof}
Suppose $A$ is not a $\T$-subspace, that is, $\T(\xi_{|A})\nleq(\T\xi)_{|A}$.
In other words, there is $B\subset A$ that is $\xi_{|A}$-closed
but not $(\T\xi)_{|A}$-closed, so that there is $b\in B$ with $a\in\cl_{\xi}\{b\}\cap(A\setminus B)$.
In other words, there is a path from $b$ to $a$. Because $B$ is
$\xi_{|A}$-closed, there is no such path in $A$. Conversely, suppose
that $A$ is a $\T$-subspace and that there is a path from $a$ to
$b$ where $a,b\in A$. Hence $b\in\cl_{\xi}\{a\}$ and $\T(\xi_{|A})=(\T\xi)_{|A}$,
so that $b\in\cl_{\xi_{|A}}\{a\}$. Therefore, there is a path from
$a$ to $b$ in $A$.
\end{proof}

\section{Sandwiched sets}

In a convergence space without the sandwich property, it makes sense
to study the connectedness of sandwiched subsets, where $B$ is called
\emph{sandwiched }if there is a connected subspace $A$ with $A\subset B\subset\cl_{\xi}A$.
We may then say that $B$ is \emph{sandwiched by $A$. }Note that
if $B$ is sandwiched by $A$, then $\cl_{\xi}B=\cl_{\xi}A$ is connected
by Proposition \ref{prop:closedconn}. Moreover, since $A$ is $\xi$-connected,
it is also $\T\xi$-connected, hence every set between $A$ and its
closure, in particular $B$, is $\T\xi$-connected by the standard
topological result. In other words, every sandwiched set is $\T\xi$-connected,
but may fail to be $\xi$-connected (as there are spaces without the
sandwich property). Since a convergence space is connected if and
only if its topological modification is connected, a $\T\xi$-connected
$\T$-subspace of a convergence space $(X,\xi)$ is also $\xi$-connected.
In fact, we have
\begin{align*}
\xi\text{-connected} & \then & \text{\ensuremath{\xi}-sandwiched} & \then & \T\xi\text{-connected}\\
\Updownarrow &  & \Downarrow &  & \Downarrow\\
r\xi\text{-connected} & \then & r\xi\text{-sandwiched} & \then & \T(r\xi)\text{-connected}
\end{align*}

Though $\xi$-connected subsets and $r\xi$-connected subsets coincide,
there are $r\xi$-sandwiched sets that are not $\xi$-sandwiched and
$\T(r\xi)$-connected sets that are not $\T\xi$-connected, as we
shall see in Example \ref{exa:Txiconnnotsandwiched} below, so that
the one-directional vertical arrows cannot be reversed. 

On the other hand, all three notions in the first row coincide among
$\T$-subspaces (and in the second row for $\T$-subspaces for $r\xi$),
but no horizontal arrow can be reversed in general. Indeed, we have
seen that there are sandwiched sets that are not $\xi$-connected.
Moreover, we may have a $\T\xi$-connected set that is not sandwiched:
\begin{example}[A $\T\xi$-connected set that is not sandwiched]
\label{exa:Txiconnnotsandwiched} Consider the pretopology on $X=\{a,b,c,d\}$
given by $\lim\{a\}^{\uparrow}=\{a,b\}$, $\lim\{b\}^{\uparrow}=\{b\}$,
$\lim\{c\}^{\uparrow}=\{c,b\}$ and $\lim\{d\}^{\uparrow}=\{c,d\}$:
\[
\xymatrix{a\ar[r] & b\\
c\ar[ur] & d\ar[l]
}
\]

Then $A=\{a,b,d\}$ is $\T\xi$-connected, as $\T\xi$ is given by
\[
\xymatrix{a\ar[r] & b\\
c\ar[ur] & d\ar[l]\ar[u]
}
\]

but it is not sandwiched, as the only $\xi$-connected subsets of
$A$ are $\{d\}$ and $A\not\subset\cl_{\xi}\{d\}=\{d,c,b\}$ and
$\{a,b\}$, which is $\xi$-closed.

Note also that $\T(r\xi)$ is the antidiscrete topology, so every
subset is $\T(r\xi)$-connected and every non-empty subset is $r\xi$-sandwiched
because thus the $r\xi$-closure of a singleton is $X$. However,
$\{a,c\}$ is not $\T\xi$-connected, and $A$ is not $\xi$-sandwiched.
\end{example}

To summarize:
\begin{prop}
\label{prop:basicsandwichedset}Let $(X,\xi)$ be a convergence space.
\begin{enumerate}
\item Every $\xi$-connected set is sandwiched.
\item Every sandwiched set is $\T\xi$-connected. 
\item A $\T\xi$-connected (in particular, sandwiched) $\T$-subspace is
$\xi$-connected.
\item There are sandwiched sets that are not $\xi$-connected and $\T\xi$-connected
subsets that are not sandwiched.
\item If $\bigcup_{\beta<\alpha}\adh_{r\xi}^{\beta}A\subset B\subset\adh_{r\xi}^{\alpha}A$
for some ordinal $\alpha$ and $\xi$-connected set $A$, then $B$
is $\xi$-connected.
\end{enumerate}
\end{prop}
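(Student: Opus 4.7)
The plan is to treat the five items in order. Items (1)--(3) follow immediately from preceding material, item (4) is an existence claim dispatched by pointing to examples already in the paper, and item (5) requires the only real argument, a transfinite induction obtained by combining Proposition \ref{prop:iterated} and Proposition \ref{prop:adhsandwich} after passing to the reciprocal modification $r\xi$.

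For (1), $A$ itself sandwiches $A$ via $A\subset A\subset\cl_{\xi}A$. For (2), since $\cl_{\xi}A=\cl_{\T\xi}A$ by definition of $\T$ and $A$ is automatically $\T\xi$-connected, the standard topological sandwich theorem applied inside $(X,\T\xi)$ forces $B$ to be $\T\xi$-connected. For (3), the defining identity $\T(\xi_{|A})=(\T\xi)_{|A}$ for a $\T$-subspace lets $\T\xi$-connectedness of $A$ pass to connectedness of $\T(\xi_{|A})$ and hence of $\xi_{|A}$, since a convergence and its topological modification are simultaneously connected.

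For (4), the first half is furnished by the F\'eron-cross pretopology of Remark \ref{rem:enotidempotent}: with $B$ the connected set displayed there, $B\cup\{(0,0)\}$ lies strictly between $B$ and $\cl_{\xi}B$ but is not $\xi$-connected because $(0,0)\notin e(B)$. The second half is Example \ref{exa:Txiconnnotsandwiched}, where $\{a,b,d\}$ is already verified to be $\T\xi$-connected but not $\xi$-sandwiched.

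For (5), I would shift the entire argument to $r\xi$. By Corollary \ref{cor:connectedrxi}, $A$ is $r\xi$-connected; by Proposition \ref{prop:iterated} applied to $r\xi$, each $\adh_{r\xi}^{\beta}A$ is $r\xi$-connected; since all of these sets contain $A$ (the case $A=\emptyset$ being trivial), Proposition \ref{prop:unionconnected} yields that $U:=\bigcup_{\beta<\alpha}\adh_{r\xi}^{\beta}A$ is $r\xi$-connected; and by the very definition of the transfinite iteration, $\adh_{r\xi}^{\alpha}A=\adh_{r\xi}U$, so Proposition \ref{prop:adhsandwich} applied to $r\xi$ ensures $r\xi$-connectedness of any $B$ with $U\subset B\subset\adh_{r\xi}U$. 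A final appeal to Corollary \ref{cor:connectedrxi} returns $\xi$-connectedness. The main obstacle is purely bookkeeping: one must consistently replace $\xi$ by $r\xi$ so that the sandwich result from Proposition \ref{prop:adhsandwich} applies to the principal adherence of the correct convergence, after which no new idea is needed beyond what the preceding sections have already established.
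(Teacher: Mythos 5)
Your proof is correct and follows essentially the same route as the paper, which states this proposition only as a summary of the preceding discussion: items (1)--(4) are exactly the arguments (and examples) already given in the text, and item (5) is the intended combination of Corollary \ref{cor:connectedrxi}, Propositions \ref{prop:iterated}, \ref{prop:unionconnected} and \ref{prop:adhsandwich} applied to $r\xi$. The only caveat is the degenerate case $\alpha=0$, where $\bigcup_{\beta<0}\adh_{r\xi}^{\beta}A=\emptyset$ and your identity $\adh_{r\xi}^{\alpha}A=\adh_{r\xi}\big(\bigcup_{\beta<\alpha}\adh_{r\xi}^{\beta}A\big)$ does not hold since $\adh_{r\xi}^{0}A=A$; the statement is clearly intended for $\alpha\geq1$, where your argument goes through verbatim.
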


In particular, every iterated adherence of a connected set is a \emph{connected
}sandwiched set.

\bibliographystyle{plain}

\end{document}